\newtheoremstyle{theoremstyle}
  {10pt}      
  {5pt}       
  {\itshape}  
  {}          
  {\bfseries} 
  {:}         
  {.5em}      
  {}          
\newtheoremstyle{examplestyle}
  {10pt}      
  {5pt}       
  {}          
  {}          
  {\bfseries} 
  {:}         
  {.5em}      
  {}          
\theoremstyle{plain}
\newtheorem{theorem}{Theorem}[section]
\newtheorem{lemma}[theorem]{Lemma}
\newtheorem{proposition}[theorem]{Proposition}
\newtheorem{corollary}[theorem]{Corollary}
\theoremstyle{definition}
\newtheorem{definition}[theorem]{Definition}
\newtheorem{example}[theorem]{Example}
\newtheorem{notation}[theorem]{Notation}
\theoremstyle{remark}
\newtheorem{remark}[theorem]{Remark}
\newcommand{\bigslant}[2]{{\ensuremath\raisebox{.2em}{$#1$}\left/\raisebox{-.2em}{$#2$}\right.}}
\newcommand{\dd}{\mbox{d}}
\newcommand{\Loc}{\mbox{Loc}}
\newcommand{\Sp}{\mbox{Sp}}
\newcommand{\Sps}{\mbox{\tiny Sp}}
\newcommand{\cG}{\mathcal{G}}
\newcommand{\T}{\mathbb{T}}
\newcommand{\Z}{\mathbb{Z}}
\newcommand{\C}{\mathbb{C}}
\newcommand{\D}{\mathbb{D}}
\newcommand{\R}{{\mathbb{R}}}
\newcommand{\N}{{\mathbb{N}}}
\newcommand{\Hom}{{\textrm{Hom}}}
\newcommand{\Id}{\mbox{Id}}
\renewcommand{\phi}{\varphi}
\newcommand{\WW}{\mathbb{W}}
\newcommand{\W}{\mathcal{W}}
\newcommand{\Ad}{\mbox{Ad}\hspace{0.1cm}}
\DeclareMathOperator*{\Ker}{\mbox{Ker}}
\DeclareMathOperator*{\End}{\mbox{End}}
\DeclareMathOperator*{\bigstarm}{\bigstar}
\newcommand{\AH}{{\mathbb{A}_\hbar}}
\newcommand{\hhbar}{[\![\hbar]\!]}
\newcommand{\chec}{\check{\mathrm{H}}}
\newcommand{\CH}{\mbox{H}}
\newcommand{\Ima}{\mbox{Im}\hspace{0.1cm}}
\newcommand{\norm}[1]{\left\lVert#1\right\rVert}
\newcommand{\umega}{\bar{\omega}}
\newcommand{\Aut}{\mbox{Aut}}
\newcommand{\Symp}{\mbox{Symp}}
\newcommand{\TF}{\mathcal{T}_\mathcal{F}}
\newcommand{\g}{\mathfrak{g}}
\newcommand{\lsp}{\mathfrak{sp}}
\newcommand{\tg}{\tilde{\mathfrak{g}}}
\newcommand{\tM}{\widetilde{M}}
\newcommand{\G}{{\mathcal{G}_\nabla}}
\newcommand{\GZ}{{\overline{\mathcal{G}_\nabla}}}
\newcommand{\cZ}{\mathcal{Z}}
\newcommand{\Cht}{\C\hhbar^\times}
\newcommand{\Ch}{\C\hhbar}
\title[Group actions on symplectic deformation quantization]{Extension and Classification of Group Actions on \\Formal Deformation Quantizations \\of Symplectic Manifolds}
\author{Niek de Kleijn}
\thanks{Niek de Kleijn was supported by the Danish National Research Foundation
through the Centre for Symmetry and Deformation (DNRF92).}
\begin{document}
	\begin{abstract}
	We examine the existence and classification problems for group actions on deformation quantizations of symplectic manifolds. We restrict ourselves to the star products obtained 
	using the Fedosov construction on a symplectic manifold $(M,\omega)$. We define a notion of lift of a group action that does not impose restrictions on the Fedosov connection realizing a certain gauge equivalence class of star products. We reformulate the well known sufficient conditions for existence of such lifts and prove (by way of example) that they are not necessary. Given existence of a lift we define a (non-Abelian) group $\GZ$ such that the first group cohomology with values in $\GZ$ classifies lifts up to a natural notion of equivalence. Finally we develop tools that allow for computation of this group cohomology and apply these to do computations in a range of examples. 
\end{abstract}
\maketitle

	\section{Introduction}
	\subsection{Background} 
	
	The concept of deformation quantization and in particular formal deformation quantization was famously introduced in the influential paper \cite{BFFLS} and has been extensively studied 
	since then. It phrases the problem of quantization of physical systems in terms of the mathematical notion of formal deformations of algebras developed by Gerstenhaber in the series of 
	papers 
	\cite{G}. One of the main achievements in the field is of course Kontsevich's classification of the formal deformation quantizations of Poisson manifolds \cite{Kont}. It is however 
	relevant to note that Fedosov had essentially performed such a classification in the case of symplectic manifolds \cite{Fb, Fp} (which are arguably the ones most encountered in 
	physics) earlier.
	
	\vspace{0.3cm}
	
	One of the advantages noted in Fedosov's original paper is that there is a simple way to lift any symplectomorphism of the manifold to an automorphism of the deformation 
	quantization. It pays to keep in mind however that these lifts do not obviously respect compositions and are in general not unique. Since 
	group actions on the symplectic manifold can be interpreted as symmetries of the classical mechanical system, the question of whether one can lift a group action naturally arises and 
	Fedosov comments on this in his original paper \cite{Fp}. 
	
	\vspace{0.3cm}
	
	Another reason why this question arises is in the context of the algebraic index theorem \cite{N-T95, Fp}. The main observation underlying this theorem is that the pseudo-differential 
	operators on a smooth manifold form a deformation quantization of the cotangent bundle of that manifold (with the canonical symplectic structure). 
	Given this observation the algebraic index theorem 
	is the algebraic counterpart of the Atiyah-Singer index theorem \cite{A-S}. Note that the algebraic index theorem holds for any deformation quantization of any symplectic manifold however
	(not just cotangent bundles and pseudo-differential operators). It was also shown by Nest-Tsygan 
	in \cite{N-T96} that one can rederive the (analytic) index theorem from the algebraic one. If we now consider a manifold carrying a group action (or a foliation) it leads to the equivariant 
	(resp transversal) index theorem \cite{C-M, P-R}. Therefore it makes sense to expect algebraic counterparts to these theorems as well. This makes even more sense since these counterparts 
	were found for orbifolds \cite{PPT2}. In order to derive such an equivariant algebraic index theorem in general, we should first understand the nature of induced group actions on deformation 
	quantizations and this paper addresses that issue. 
	
	\vspace{0.3cm}
	
	We should note here that this paper has been part of the thesis work \cite{thesis}, in which the relation to the equivariant index theorem is clarified and many parts of this paper are reproduced. This paper was in fact produced, but not peer reviewed, prior to the production of \cite{thesis} and it seems appropriate in any case to offer a stand-alone account of the results. Similarly to this paper one can find a stand-alone account of the equivariant algebraic index theorem in \cite{EQAIT}.
	
	\vspace{0.3cm}
	
	Some work towards both the classification and existence results of such group actions on formal deformation quantizations of symplectic manifolds was in fact already carried out. 
	First of all Fedosov comments on the problem in both his original paper and his book \cite{Fp, Fb}. Furthermore, when one restricts to so called invariant star products (a definition will 
	follow),  a classification up to equivariant equivalence was found by Bertelson-Bieliavsky-Gutt 
	\cite{Ginvariant} and recently this classification was extended to include a notion of quantum moment maps by Reichert-Waldmann \cite{RW}. In this paper we will extend these considerations to provide some 
	(limited) existence results and (extensive) classification results for group actions on symplectic 
	manifolds, where we consider any  extensions to the deformation quantization. The reason to consider more than the natural extensions is explained in Remark \ref{why}.
\subsection{Set-up, Notation and Convention} 
Let us now be more precise about the concepts mentioned above and at the same time introduce the notations and conventions used in this paper. 

\vspace{0.3cm}

First of all we will consider only symplectic manifolds $(M,\omega)$ of dimension $2d$. We will denote the corresponding 
Poisson bracket on the algebra $C^\infty(M)$ of complex valued smooth functions on $M$ by $\{\cdot,\cdot\}$. Note that we will always consider complex valued functions and thus also complex valued cohomology. 
Furthermore let us fix the formal variable $\hbar$ for the rest of this paper. Vector spaces and linear maps will often be considered as defined over the ring $\C\hhbar$ of formal power series with coefficients in $\C$. 
\begin{definition}\label{defdefq}[Formal Deformation Quantization]
	A formal deformation quantization of a symplectic manifold $(M,\omega)$ is given by an associative product $\star$ on the formal power series in $\hbar$ with coefficients in $C^\infty(M)$
	such that 
	\begin{enumerate} 
		\item $\star$ is a deformation of the $\C\hhbar$-linear extension of the usual product, i.e. \\$\forall f,g\in C^\infty(M)\hhbar$ we have that 
		\[f\star g=f\cdot g +\sum_{k\geq 1}\hbar^kB_k(f,g)\] for some $\C\hhbar$-bilinear operators $B_k$; 
		\item $\star$ is local, i.e. the operators $B_k$ are bidifferential; 
		\item $\star$ is a quantization of $\{\cdot,\cdot\}$, i.e. $\forall f,g\in C^\infty(M)\hhbar$ 
		\[ f\star g-g\star f = i\hbar\{f,g\} +\hbar^2 C(f,g),\]
		where we extend $\{\cdot,\cdot\}$ to $C^\infty(M)\hhbar$ by $\C\hhbar$-bilinearity and $C$ is a $\C\hhbar$-bidifferential operator, and 
		\item $\star$ is normalized, i.e.  we have $1\star f=f=f\star 1$ for all $f\in C^\infty(M)\hhbar$. 
		
	\end{enumerate}
	
\end{definition}

Note that the locality assumption implies that the algebra $(C^\infty(M)\hhbar,\star)$ is actually given by the global sections of a sheaf of algebras $\AH$. In the following we will often denote 
a deformation quantization by $\AH$ or $\AH(M)$ if we want to emphasize especially that we are dealing with a sheaf. We will call $\AH$ a formal deformation quantization, deformation 
quantization or sometimes even a deformation, since we are not considering any other deformation quantizations or even deformations this should not lead to any confusion. We will rarely explicitly 
write the product as $\star$. The product used on different algebras should generally be clear from context. 

\vspace{0.3cm} 

The Fedosov construction \cite{Fb, Fp}, which we will give a more elaborate recollection of in the next section,  is a method for constructing deformation quantizations of symplectic manifolds. In this construction one starts by considering a certain bundle of algebras 
$\mathcal{W}\rightarrow M$ called the Weyl bundle. This bundle is straightforwardly constructed from the bundle of symplectic vector spaces $(T^*M,\umega)$, where we denote by 
$\umega_x$ the symplectic structure on $T^*_xM$ induced by $\omega_x$ and the isomorphism $T^*_xM\simeq T_xM$ given by $v\mapsto \omega_x(v,-)$. One then defines a certain connection, 
usually called a \emph{Fedosov} connection $\nabla$ on $\mathcal{W}$ and shows that the kernel $\Ker \nabla \subset \Gamma(\W)$ (we will often use $\Gamma(-)$ to refer to smooth sections) comes 
equipped with a linear isomorphism to $C^\infty(M)\hhbar$ such that the induced product $\star$ defines a deformation quantization. We will also present a re-interpretation of the construction through the lens of formal geometry as in \cite{N-T95}. 

\begin{definition} 
	A \emph{gauge equivalence} between two deformation quantizations ${\AH_1}$ and ${\AH}_2$ is given by a series of $\C\hhbar$-linear operators $\{T_k\}_{k\geq 1}$ such that 
	\[\Id+\sum_{k\geq 1}\hbar^kT_k\colon {\AH}_1(M)\longrightarrow {\AH}_2(M)\] 
	is an algebra isomorphism. The operators $T_k$ are then necessarily differential.
\end{definition}

In fact it has been shown that deformation quantizations of $(M,\omega)$ are classified up to gauge equivalence by their (Deligne) characteristic class \cite{Deligne} in $\CH^2(M)\hhbar$ 
(formal power series with coefficients in $\CH^2(M,\C)$). Here we mean de Rham cohomology by $\CH^\bullet(M;A)$ ($A$ refers to coefficients) as will we in the rest of this paper. 
We will denote differential forms by the symbol $\Omega$, i.e. $\Omega^3(M;E)$ will mean differential $3$-forms with values in the bundle $E$. Given a closed $2$-form 
$\theta\in\Omega^2(M)\hhbar$ one can construct a Fedosov connection $\nabla_\theta$ such that the corresponding deformation quantization has characteristic class 
$[\theta]\in \CH^2(M)\hhbar$ \emph{uniquely} (we will always use $[\cdot]$ to signify equivalence classes). So we see that up to gauge equivalence every deformation quantization is a Fedosov quantization. 

\vspace{0.3cm} 

We show in Lemma \ref{lifts} that there is a natural \emph{surjection} 
\[\Aut(\AH)\longrightarrow \Symp(M,\omega)_{[\theta]}.\] 
Here we have denoted  the automorphisms of the sheaf of algebras $\AH$ by $\Aut(\AH)$ and  the stabilizer of the characteristic class $[\theta]$ 
of $\AH$,  for  the natural action of symplectomorphisms $\Symp(M,\omega)$ on $\CH^2(M)\hhbar$, by $\Symp(M,\omega)_{[\theta]}$. Thus we can extend any symplectomorphism of $M$ that stabilizes the characteristic class 
to the deformation quantization. The map is such that given a symplectomorphism $\phi\in\Symp(M,\omega)_{[\theta]}$ an element $a_\phi\in\Aut(\AH)$ in the fiber over $\phi$ 
acts in lowest order in $\hbar$ like $\phi^*$ (we denote pull-backs by upper stars and push-forwards by lower stars).  In \cite{N-T99} the authors show that, given any such $\phi$, 
one can construct an invertible element $U_\phi\in\Gamma(\mathcal{W})^\times$ (we will denote invertibles by a superscript $\times$) such that 
\[\Aut(\AH)\ni\Ad U_\phi\circ\phi^*\mapsto \phi\in\Symp(M,\omega)_{[\theta]}\] 
where we denote the map given by $f\mapsto UfU^{-1}$ by $\Ad U$. In fact they construct $\delta_\phi\in\Gamma(\mathcal{W})$ such that $U_\phi=e^{\delta_\phi}$. Note that since any gauge equivalence is locally 
given by an inner automorphism \cite{Fp} we find that lifts are more generally of the form given above. This will reflect in our definition \eqref{extendaction} of extensions of 
group actions to deformation quantizations. 

\vspace{0.3cm} 

The paper is structured in the following way. In section 2 we recall the main constructions and results of the Fedosov construction. Then, in section 3,  we establish some of the facts presented above. 
In particular we define what we mean by an extension of an action by symplectomorphisms to the deformation. We proceed, in section 4, by giving first an abstract and generally hard to check requirement 
that needs to 
be fulfilled in order for a given action to extend. In Proposition \ref{trivialextensions} we give a few more easily verifiable conditions under which extension of the group action is 
possible. 
However, we also give an example 
of an action that extends, but does not satisfy these conditions. The main part, section 5, of the paper deals with the classification of such extended group actions. In this part we define a 
certain set $\CH^1(\Gamma,\GZ)$ which classifies extensions of the action of $\Gamma$ by symplectomorphisms to the deformation quantization corresponding to the Fedosov connection $\nabla$. 
Lastly we derive certain computational tools (summarized in the diagram \eqref{computingsquare}) that allow us to show that this set $\CH^1(\Gamma,\GZ)$ is meaningful in the sense that it 
allows to be computed in a variety of cases. 

\section{The Fedosov Construction} 

	In this section we will give a brief reminder of the Fedosov construction for symplectic manifolds, see \cite{Fp, Fb, N-T99, W} for more in depth expositions of the construction. We will also present the viewpoint of this construction when one considers it in the framework of formal geometry as is done in, for instance, \cite{N-T95} and \cite{Felder}. 

\subsection{Construction}

	The Fedosov construction allows one to construct any 
deformation quantization of a symplectic manifold (up to gauge equivalence) as a subalgebra of a certain algebra of sections. Let us first show how one can construct the underlying bundle of 
algebras, the so called \emph{Weyl  algebras bundle}.

\begin{definition} 
	
 Let $W\colon SymV_\C\longrightarrow GrA_{\C\hhbar}$ denote the Weyl functor from the category of complex symplectic vector spaces to the category of graded associative algebras over $\C\hhbar$ given by
	\[W\left(V,\alpha\right)=\widehat{T(V^*)\hhbar/I_\alpha},\] 
	 $V^*$ is the dual to $V$, $T(V^*)$ denotes the tensor algebra of $V^*$, the ideal $I_\alpha\subset T(V^*)\hhbar$ is generated by the elements 
	\[v\otimes w-w\otimes v-i\hbar\overline{\alpha}\left(v,w\right)\] for $v,w\in V^*$ and the hat signifies completion in the 
	$\langle V\rangle$-adic topology. Here $\overline{\alpha}$ denotes the symplectic structure dual to $\alpha$. The grading is given 
	by the assertion that $|\hbar|=2$ and $|v|=1$ for all $v\in V$. 
\end{definition}

\begin{notation}
	Given $\R^{2m}$ ($m\in\N$) with the standard symplectic structure $\omega_{st}$ we construct the standard complex symplectic vector 
	space by considering $\C^{2m}=\R^{2m}\otimes_\R\C$ with the symplectic structure 
	$\eta_{st}=\omega_{st}\otimes_\R 1$. Then we will denote $\WW^m:=W\left(\C^{2m},\eta_{st}\right)$ (we will usually simply denote it  $\WW$ if the dimension is implied). In the following we will always denote the component of homogeneous elements of degree $k$ by a subscript, i.e. $\WW_k$ and so on.
\end{notation}

\begin{notation}
	Given a symplectic manifold $M$ of dimension $2d$ we shall denote the principal $\Sp(2d,\R)$-bundle of symplectic frames (in the cotangent bundle) by 
	$\pi_1\colon \Sp_M\rightarrow M$. 
	\end{notation}

	Note that, since $W$ is a functor, the standard action of $\Sp(2d,\R)$ 
on $\R^{2d}$ induces an action on $\WW$.

	\begin{definition}\label{Weylalgebrasbundle}[Weyl Algebras Bundle] 
 We denote the bundle of algebras over $M$ associated to $Sp_M$ with fiber
	$\WW$ by $\W$.  We will call $\W$ the Weyl algebras bundle over $M$. 
\end{definition}

Note that 
\[\W_x=W\left(T^*_xM\otimes_\R\C,\umega_x\otimes_\R 1\right).\] We obtain the subalgebra isomorphic to a formal deformation quantization as the kernel of a connection on the Weyl algebras bundle.

\begin{definition} 
	By a connection on $\W$  we mean a $\C\hhbar$-linear map \[\nabla\colon\Omega^\bullet\left(M;\W\right)\longrightarrow \Omega^{\bullet+1}\left(M;\W\right)\] 
	such that 
	\[\nabla\left(\sigma\tau\right)=\nabla\left(\sigma\right)\tau+\left(-1\right)^k\sigma\nabla\left(\tau\right),\] for all
	$\sigma\in\Omega^k\left(M;\W\right)$ and $\tau\in\Omega^l\left(M;\W\right)$, and $\nabla f=\dd f$ for $f\in C^\infty\left(M\right)\subset \Gamma\left(\W\right)$. Here the inclusion comes from 
	the inclusion of the ground field in the tensor algebra.
\end{definition}

\begin{notation}
	We will denote the Lie algebra of continuous derivations of $\WW$ by $\g$ and we denote the group of continuous automorphisms of $\WW$ by $G$. Here we consider $\WW$ equipped with the topology induced by the grading. We shall also denote the Lie algebra given by $\frac{\WW}{i\hbar}$ equipped with the commutator bracket of $\WW$ by $\tg$. 
 \end{notation}

Note that the grading of $\WW$ yields the filtrations 
\[\g:=F_{-1}\g\supset F_0\g\supset\ldots\supset F_k\g\supset\ldots\]
and
\[G:=G_0\supset G_1\supset \ldots\supset G_k\supset \ldots\] and the grading 
\[\tg=\prod_{k=-2}^\infty \tg_k=\prod_{k=0}^\infty \frac{\WW_k}{i\hbar}.\]
Let us summarize some of the results about these in the following proposition, see 
\cite{RRII, N-T95} and \cite{thesis} for proofs of these results. 

\begin{proposition}\label{gG}
	
	\leavevmode
\begin{itemize} 
	\item The sequence 
	\begin{equation}\label{ses}0\rightarrow\frac{\C\hhbar}{i\hbar}\longrightarrow \tg\stackrel{P_\g}{\longrightarrow} \g\rightarrow 0\end{equation}
	
	given by $P_\g(\frac{X}{i\hbar})=\frac{1}{i\hbar}[X,-]$ is exact and preserves filtrations;
	\item \[\g=\prod_{k=-1}^\infty \g_k=\prod_{k=-1}^\infty P_\g(\tg_k),\] i.e. $\g$ is graded;
	\item $\g_{-1}\simeq \C^{2d}$ (with trivial Lie algebra structure);
	\item $\g_0\simeq \lsp(2d,\C)$ the Lie algebra of $\Sp(2d,\C)$ and $\bigslant{G}{G_1}\simeq \Sp(2d,\C)$;
	\item The sequence \eqref{ses} allows for an $\lsp(2d,\C)$ equivariant section $\tilde{\sigma}$;
	\item $F_0\g=F_1\g\rtimes \lsp(2d,\C)=P_\g\left(\frac{\WW_{k\geq3}}{i\hbar}\right)\rtimes \lsp(2d,\C)$;
	\item $F_1\g$ is a pro-nilpotent Lie algebra and we have $\exp(F_1\g)=G_1$. 
	\item $G\simeq G_1\rtimes \Sp(2d,\C)$.
\end{itemize}
\end{proposition}

\begin{notation} 
By keeping in mind the definition of $\WW$ through complexification of $\R^{2d}$ we obtain the subgroup $G_r=G_1\rtimes\Sp(2d,\R)\subset G$. 
\end{notation}

Note that, given a connection $\nabla$ on $\mathcal{W}$ which is continuous for the filtration topology on the fibers $\mathcal{W}_x$, we find by the first item of proposition \ref{gG}  that there always exists a $1$-form $B=B_{-1}+B_0+\ldots\in \Omega^1(M;\tg)$ such that, fixing a symplectic connection $\nabla_0$, we have
\[\nabla=\nabla_0+P_\g(B)=P_\g(B_{-1})+\nabla_0+P_\g(B_0)+P_\g(B_1)+\ldots.\]

\begin{definition} 
Let $A_{-1}\in\Omega^1(M;\tg_{-1})$ be given by 
$A_{-1}(X)=\frac{\omega(X,-)}{i\hbar}$ where we consider the obvious inclusion 
$\Omega^1(M)\hookrightarrow \Gamma(W)$ in degree $1$. 
\end{definition} 

\begin{definition}\label{Fedconn}[Fedosov connection] 
	A connection $\nabla$ on $\W$ is called a \emph{Fedosov connection} if it is \emph{flat}, i.e. $\nabla^2=0$, and of the form 
	\[\nabla=P_\g(A_{-1})+\nabla_0+P_\g(A_0)+P_\g(A_1)+P_\g(A_2)+\ldots\] for some symplectic  connection $\nabla_0$ and some 1-forms 
	$A_i\in \Omega^1(M;\tg_i)$ 
	\end{definition}

Note that the condition on a connection to be a Fedosov connection is simply that it has a specific lowest degree part and is flat. It turns out that torsion-freeness of the connection 
$\nabla_0$ allows us to set $A_0=0$ \cite{W, Fb}.  
	\begin{notation}\label{curv}
		Given 
	$A=A_{-1}+A_0+A_1+A_2+\ldots$ and $A_\N=A-A_{-1}$ with corresponding Fedosov connection $\nabla_A=\nabla_0+P_\g(A)$. We denote the curvature of the connection $\nabla_A$ by
	$\Omega_A:=\frac{R_0}{i\hbar}+\nabla_0\left(A_\N\right)+[A_{-1},A_\N]+\frac{1}{2}[A_\N,A_\N]$. Here $R_0$ is given by the curvature of $\nabla_0$.\end{notation} Note that for a Fedosov connection, since $\nabla_A^2=P_\g(\Omega_A)$, $\Omega_A$ 
is central and, since $\nabla_A\Omega_A=0$, also closed \cite{Fb}.

	\begin{theorem} 
	For all closed 2-forms $\theta\in\Omega^2\left(M,\C\right)\hhbar$ we can find a Fedosov connection $\nabla_\Theta$ such that the curvature $\Omega_\Theta=\frac{\omega}{i\hbar}+\theta$. 
\end{theorem}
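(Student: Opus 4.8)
The plan is to solve for the positive–degree part $A_\N=A_1+A_2+\cdots$ by a degree‑by‑degree recursion built on the fibrewise Koszul differential $\delta:=[A_{-1},-]$. First I would record the structural facts that reduce the statement to a single equation. Expanding $\nabla_A^2$ for $A=A_{-1}+A_\N$ and using that $\nabla_0$ is symplectic (so $\nabla_0A_{-1}=0$), one finds that the full curvature of $\nabla_A$ is $\tfrac{\omega}{2i\hbar}+\Omega_A$, where a direct computation gives $\tfrac12[A_{-1},A_{-1}]=\tfrac{\omega}{2i\hbar}$, the central constant in the statement, and $\Omega_A$ is exactly the expression fixed in the notation above. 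Since the prescribed value $\tfrac{\omega}{2i\hbar}+\theta$ is a scalar–valued (hence central) form, any connection attaining it automatically has $\nabla_\theta^2=[\tfrac{\omega}{2i\hbar}+\theta,-]=0$, so flatness is free and the whole problem collapses to
\[
\delta A_\N \;=\; \theta-\frac{R_0}{i\hbar}-\nabla_0 A_\N-\tfrac12[A_\N,A_\N].
\]

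Second, I would invoke the fibrewise Poincaré lemma for the Weyl bundle: $\delta^2=0$, $\delta$ lowers symmetric degree by one, and there is a homotopy operator $\delta^{-1}$ with $\delta\delta^{-1}+\delta^{-1}\delta=\Id-\sigma$ (and $(\delta^{-1})^2=0$, $\delta^{-1}\sigma=\sigma\delta^{-1}=0$), where $\sigma$ projects onto the symmetric‑degree‑zero, form‑degree‑zero part. Imposing the normalization $\delta^{-1}A_\N=0$ and noting $\sigma A_\N=0$ since $A_\N$ has strictly positive degree, applying $\delta^{-1}$ turns the curvature equation into the fixed‑point form
\[
A_\N \;=\; \delta^{-1}\Bigl(\theta-\frac{R_0}{i\hbar}-\nabla_0 A_\N-\tfrac12[A_\N,A_\N]\Bigr).
\]
Because $\delta^{-1}$ strictly raises the total Weyl degree while $\nabla_0$ preserves it and the bracket term is quadratic, the right–hand side expresses each homogeneous component $A_j$ in terms of the lower components $A_1,\dots,A_{j-1}$ and the fixed sources $\theta$, $R_0$. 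Thus the equation has a unique solution obtained by iteration, convergent in the degree (adic) filtration; a routine induction, using that $\theta$, $R_0$, $\omega$ have the correct reality and that $\delta^{-1}$, $\nabla_0$ preserve $\tg$‑valued forms, shows each $A_j\in\Omega^1(M,\tg_j)$, so $\nabla_\theta:=\nabla_0+[A,-]$ is a genuine Fedosov connection.

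The step I expect to be the main obstacle is verifying that the fixed point of the reduced equation actually solves the original curvature equation, i.e.\ that applying $\delta^{-1}$ discarded no information. I would set $C:=\Omega_A-\theta$. From the fixed‑point construction together with the homotopy identities one gets $\delta^{-1}C=0$ and $\sigma C=0$ (the latter since $C$ has form degree $2$), whence $C=\delta^{-1}\delta C$. On the other hand the Bianchi identity $\nabla_A\Omega_A=0$, combined with $\nabla_A\theta=d\theta+[A,\theta]=0$ ($\theta$ closed and central), yields $\nabla_A C=0$, that is $\delta C=-\nabla_0 C-[A_\N,C]$. Substituting gives $C=\delta^{-1}\bigl(-\nabla_0 C-[A_\N,C]\bigr)$, whose right–hand side again strictly raises the Weyl degree; a degree induction then forces $C=0$, so $\Omega_A=\theta$ and the full curvature is $\tfrac{\omega}{2i\hbar}+\theta$ as required. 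This uniqueness‑of‑curvature argument is the technical heart, and it is also what underlies the uniqueness of $\nabla_\theta$ under the normalization $\delta^{-1}A_\N=0$ used elsewhere in the paper.
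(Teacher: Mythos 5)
Your proposal is correct and is essentially the argument the paper defers to its references for: the paper cites \cite{Fp, Fb, W, N-T99} rather than proving this theorem, and those sources give exactly this construction — the fixed-point recursion via the Koszul homotopy $\delta^{-1}$ with the normalization $\delta^{-1}A_\N=0$, followed by the Bianchi-identity argument showing $C=\Omega_A-\theta$ satisfies a degree-raising fixed-point equation and hence vanishes. The only caveat is the convention-dependent constant in $\tfrac12[A_{-1},A_{-1}]$, which you have matched to the paper's normalization $\tfrac{\omega}{2i\hbar}$.
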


See \cite{N-T99}, \cite{Fb}, \cite{W} for a proof of this theorem. Thus we see that  there is a surjective mapping $\nabla_A\mapsto \Omega_A-\frac{\omega}{i\hbar}$ from the set of Fedosov 
connections to $Z^2\left(M\right)\hhbar$ were $Z^2\left(M\right)$ denotes
the space of closed 2-forms. The above theorem sets up to classify all deformation quantizations of $C^\infty\left(M\right)$ (up to gauge equivalence) by $\CH^2\left(M\right)\hhbar$ by the following
propositions.
\begin{proposition}
	Suppose $\nabla$ is  a Fedosov connection. Then the complex $\left(\Omega^\bullet(M;\W),\nabla\right)$ is acyclic and we have a quasi-isomorphism of complexes 
	$Q\colon \left(C^\infty(M)\hhbar,0\right)\rightarrow \left(\Omega^\bullet(M;\W,\nabla\right)$. The algebra structure induced by the isomorphism 
	\[Q\colon C^\infty(M)\hhbar\longrightarrow \AH_\nabla:=\CH^0\left(\Omega(M;\W),\nabla\right)=\Ker \nabla\cap\Omega^0\left(M,\W\right)\] is a deformation quantization.
\end{proposition}

\begin{proposition}\label{Fedisall}
	Suppose $\AH\left(M\right)$ is a deformation quantization of $M$, then there exists a Fedosov connection $\nabla$ such that $\AH_\nabla\simeq \AH\left(M\right)$ by the map $Q^{-1}$ followed by a gauge equivalence $T$. 
\end{proposition}
A proof of this theorem may be found in \cite{N-T99}. The main argument is that one can extend the deformed product to jets of functions and so doing construct an isomorphism of jets with 
sections of $\W$. To finish the classification of deformation quantizations of $M$ we have the following theorem. 
\begin{theorem}\label{classification}
	Suppose $\nabla_A$, $\nabla_{A'}$ are two Fedosov connections, then $\AH_{\nabla_A}\simeq \AH_{\nabla_{A'}}$ if and only if 
	$[\Omega_A-\frac{\omega}{i\hbar}]=[\Omega_{A'}-\frac{\omega}{i\hbar}]\in \CH^2\left(M\right)\hhbar$. 
\end{theorem}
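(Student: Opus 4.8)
The plan is to exploit a simple but decisive observation: the subalgebra $\AH_{\nabla_A}=\Ker\nabla_A\cap\Omega^0(M,\W)$ depends only on $\nabla_A$ \emph{as a differential operator}, whereas the curvature $\Omega_A$ is computed from the underlying $\tg$-valued one-form $A$, and the operator $\nabla_A=\nabla_0+[A,-]$ pins down $A$ only modulo central one-forms. Indeed, a one-form $\beta\in\Omega^1(M)\hhbar$ taken through the center $\C\hhbar\subset\W$ satisfies $[\beta,-]=0$ but changes the curvature formula by $\nabla_0\beta=d\beta$; hence the operator $\nabla_A$ determines $\Omega_A$ only up to an exact form, and so determines a well-defined class $[\Omega_A-\tfrac{\omega}{2i\hbar}]\in\CH^2(M)\hhbar$. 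The theorem then says exactly that this class is a complete invariant of the operator up to inner gauge. Two preliminary computations drive everything. First, for $H\in\Gamma(\W)^\times$ one has $\Ad H\circ\nabla_A\circ\Ad H^{-1}=\nabla_A-[(\nabla_A H)H^{-1},-]$, and a direct check using the centrality of $\Omega_A$ shows the curvature of $\Ad H\circ\nabla_A\circ\Ad H^{-1}$ equals $\Omega_A$ exactly; thus inner gauge both identifies the quantizations and preserves the curvature. Second, the Fedosov complex $(\Omega^\bullet(M,\W),\nabla_A)$ is acyclic in positive degrees, via the standard homotopy contracting $\delta=[A_{-1},-]$.

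\textbf{Sufficiency.} Suppose $[\Omega_A-\tfrac{\omega}{2i\hbar}]=[\Omega_{A'}-\tfrac{\omega}{2i\hbar}]$, so $\Omega_{A'}-\Omega_A=d\beta$ for some $\beta\in\Omega^1(M)\hhbar$. First I would replace $A'$ by $A'':=A'-\beta$, subtracting $\beta$ through the center of $\tg$. Since $\beta$ is central this leaves the operator unchanged, so $\nabla_{A''}=\nabla_{A'}$ and $\AH_{\nabla_{A''}}=\AH_{\nabla_{A'}}$, while the curvature is recomputed to $\Omega_{A''}=\Omega_{A'}-d\beta=\Omega_A$. This reduces matters to two Fedosov connections with the \emph{equal} curvature $\Omega_A$, where the task is the uniqueness statement that $\nabla_{A''}$ and $\nabla_A$ differ by inner gauge. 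I would prove this by the usual order-by-order argument: writing $\nabla_{A''}=\nabla_A+[C,-]$, noting that equality of curvatures gives $\nabla_A C+\tfrac12[C,C]=0$, and solving recursively for $h\in\Gamma(\W)$ with $\Ad e^{h}\circ\nabla_A\circ\Ad e^{-h}=\nabla_{A''}$, the solvability of each stage being precisely the degree-$1$ acyclicity above. Then $\Ad e^{h}$ restricts to an algebra isomorphism $\AH_{\nabla_A}\iso\AH_{\nabla_{A''}}=\AH_{\nabla_{A'}}$, which becomes a gauge equivalence after transport through $\sigma$.

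\textbf{Necessity.} Conversely, let $T\colon\AH_{\nabla_A}\iso\AH_{\nabla_{A'}}$ be a gauge equivalence. As $T=\Id+O(\hbar)$ it covers the identity symplectomorphism, so by the structure of lifts recalled in the introduction it extends to an automorphism of $(\Gamma(\W),\star)$ of the form $\Ad U$ with $U\in\Gamma(\W)^\times$. This $\Ad U$ carries $\Ker\nabla_A$ onto $\Ker\nabla_{A'}$, and since a Fedosov connection is determined as an operator by its flat sections together with $\nabla f=df$, the connections $\Ad U\circ\nabla_A\circ\Ad U^{-1}$ and $\nabla_{A'}$ agree as operators, hence have curvatures equal up to an exact form. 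By the first preliminary computation the left-hand curvature is $\Omega_A$, so $[\Omega_A]=[\Omega_{A'}]$, i.e. $[\Omega_A-\tfrac{\omega}{2i\hbar}]=[\Omega_{A'}-\tfrac{\omega}{2i\hbar}]$.

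\textbf{Main obstacle.} The genuinely technical point is the uniqueness iteration in the sufficiency direction: controlling, degree by degree, that the obstruction to conjugating $\nabla_A$ into $\nabla_{A''}$ vanishes, which rests on the positive-degree acyclicity of the Fedosov complex and on having already matched the central (curvature) part. In the necessity direction the delicate step is instead the claim that a gauge equivalence lifts to an inner automorphism of $\Gamma(\W)$ intertwining the two connections; I would either derive this from the surjection $\Aut(\AH)\surj\Symp(M,\omega)_{[\theta]}$ of the introduction, or bypass it by identifying $[\Omega_A-\tfrac{\omega}{2i\hbar}]$ with the Deligne characteristic class and invoking its gauge invariance \cite{Deligne, N-T99}.
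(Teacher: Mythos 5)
Your sufficiency direction is sound and is essentially the argument the paper attributes to \cite{N-T99}: absorb the exact difference of curvatures into the central part of the connection one-form (which changes $\Omega_{A'}$ by $-d\beta$ without changing the operator), and then conjugate two Fedosov connections with \emph{equal} curvature into one another by an inner automorphism $\Ad e^{h}$ built degree by degree, the solvability at each stage resting on the acyclicity governed by $\delta=[A_{-1},-]$. Your preliminary computation that $\Ad H\circ\nabla_A\circ\Ad H^{-1}$ has curvature exactly $\Omega_A$ is also correct and is used throughout the paper.

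The gap is in the necessity direction, at the step where you assert that a gauge equivalence $T\colon\AH_{\nabla_A}\to\AH_{\nabla_{A'}}$ ``extends to an automorphism of $(\Gamma(\W),\star)$ of the form $\Ad U$ with $U\in\Gamma(\W)^\times$,'' citing the structure of lifts from the introduction. What the introduction (following Fedosov) actually provides is only that such a $T$ is \emph{locally} inner: on a good cover $T|_{U_i}=\Ad H_i$, and the $H_i$ glue only up to a \v{C}ech $1$-cocycle with values in $\cZ^\times$, whose class has no a priori reason to vanish ($\chec^1(\mathcal{U},\cZ^\times)$ is essentially $\CH^2(M,\Z)$). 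Moreover, local innerness buys you nothing for this direction: on each contractible $U_i$ the relation $\Omega_{A'}-\Omega_A=d\beta_i$ holds automatically by the Poincar\'e lemma, so the local statement cannot detect the global cohomology class. The global innerness you need is true, but within this paper it is essentially equivalent to the surjectivity of $\D\colon\G\to Z^1(M)\hhbar$ on global sections, i.e.\ to Proposition \ref{brunt}, one of the main technical results of Section 5 — it is not available at the point where Theorem \ref{classification} is stated. Your proposed fallback, invoking gauge invariance of the Deligne class after identifying it with $[\Omega_A-\frac{\omega}{2i\hbar}]$, is circular here: that this class is a gauge invariant is precisely the assertion being proved. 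The paper's route for this direction is genuinely different: one extends the star product to the algebra of $\infty$-jets of functions, so that a gauge equivalence induces an isomorphism of the corresponding jet algebras carrying one (Fedosov) connection to the other, from which equality of the curvature classes follows.
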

Again the proof may be found in \cite{N-T99} and the proof  of the ``if'' statement is given by constructing an automorphism of sections of the Weyl bundle degree by degree such that
conjugation by this automorphism maps 
one connection to the other. The main 
tool that is used is acyclicity of the complex with differential $[A_{-1},-]$. The ``only if'' statement follows again from the observation that one can extend the deformation to jets and 
the isomorphism implies an 
isomorphism of the corresponding algebras of jets. The above theorems combine to imply that $\CH^2\left(M\right)\hhbar$, or rather $\frac{\omega}{i\hbar}+\CH^2\left(M\right)\hhbar$, completely classifies formal deformation quantizations of the symplectic 
manifold $(M,\omega)$ up to gauge equivalence. In the following we will sometimes refer to the class in $\CH^2\left(M\right)\hhbar$ as the \emph{characteristic} class since the 
translation by $\frac{\omega}{i\hbar}$ will often not play a role.

\subsection{Elements of formal geometry} 

In this section we will comment (very briefly) on the Fedosov construction as considered in the framework of formal geometry, since we will need a few results from this framework. For a more in depth analysis see \cite{thesis} or \cite{EQAIT}. For this section we shall fix a deformation quantization $\AH(M)$ of $M$. 

\vspace{0.3cm} 

\begin{notation}
Denote the algebra of jets of functions at $m\in M$ by $J_m^\infty(M)$, explicitly we have 
$J^\infty_m(M):=\varprojlim C^\infty(M)/\left(\mathcal{I}_m\right)^k$ where $\mathcal{I}_m$ is
the ideal of functions vanishing at the point $m$ and $k\in \N$. We shall denote 
$J_0^\infty(\R^{2d})=:\mathbb{O}$.
\end{notation}

 Since the product $\star$ on $\AH(M)$ is local it also defines $\C\hhbar$-bilinear products $\star_m$ on the jets $\widehat{A_m(M)}:=J^\infty_m(M)\hhbar$ for all $m\in M$. 

\begin{notation} 
	We denote the algebra homomorphism $\AH(M)\rightarrow C^\infty(M)$ given by setting $\hbar=0$ by $\sigma$. Similarly we denote the induced algebra homomorphism 
	$\widehat{A_m(M)}\rightarrow J^\infty_m(M)$ by $\hat{\sigma}_m$. 
	\end{notation}

Consider the deformation quantization $\AH(\R^{2d})$ given by the Moyal--Weyl product 
\[
\left(f\star g\right)\left(\xi,x\right)=\exp\left.\left(\frac{i\hbar}{2}\sum_{i=1}^d(\partial_{\xi^i}\partial_{y^i}-\partial_{\eta^i}\partial_{x^i})\right)f
\left(\xi,x\right)g\left(\eta,y\right)\right|_{\substack{\xi^i=\eta^i\\x^i=y^i}}.
\]
Then we find that $\widehat{A_0(\R^{2d})}\simeq \WW$, see \cite{thesis, EQAIT}. Since symplectic manifolds are locally symplectomorphic to $\R^{2d}$, deformation quantization is local and deformation quantization of $\R^{2d}$ (with the standard symplectic structure) is unique up to gauge equivalence, we find that $\widehat{A_m(M)}\simeq \WW$ for all $m\in M$. 

\begin{definition} 
The manifold of deformed non-linear frames $\tM$ is given by 
\[\tM:=\left\{\phi_m\colon\widehat{A_m(M)}\stackrel{\sim}{\longrightarrow} \WW \right\},\]
where we consider only continuous isomorphisms. 
\end{definition}

Note that the decomposition of $G$ given in \ref{gG} shows that it has the structure of a pro-finite dimensional Lie group, since $G_1:=\exp(F_1\g)$ and $F_1\g$ is a pro-nilpotent Lie algebra. Thus we find the pro-finite dimensional manifold structure on $\tM$ given by considering it as a $G$-principal bundle over $M$ through the obvious action of $G$ and the projection $\phi_m\mapsto m$. For details we refer to \cite{thesis, N-T95}. 

\vspace{0.3cm} 

The construction of the manifold $\tM$ goes through mutatis mutandis in the case that we replace the deformation quantization $\AH(M)$ simply by the algebra of (real valued) smooth functions. In this case it can be interpreted as the manifold of jets of coordinate charts on $M$. If we want to carry through such an interpretation our current $\tM$ is slightly too large. 

\begin{definition} 
 Let $\tM_r\subset \tM$ denote the submanifold given by $\phi_m\in \tM_r$ if and only if 
 $\phi_m\in \tM$ and the unique map $\tilde{\phi}_m\colon J^\infty_m(M)\rightarrow \mathbb{O}$, 
 such that $\tilde{\phi}_m\circ \hat{\sigma}_m=\hat{\sigma}_0\circ\phi_m$, is induced by a symplectomorphism $S_\phi\colon \R^{2d}\rightarrow M$ sending $0\mapsto m$. 
\end{definition}

We note that $\tM_r$ is a $G_r$-principal bundle over $M$ and it is a $G_1$-principal bundle over $\Sp_M$. The projection $\tM_r\rightarrow \Sp_M$ is given by sending $\phi_m$ to the 
symplectic frame given by $\left\{(S_\phi)_*(\partial_{x_1}|_0), (S_\phi)_*(\partial_{\xi_1}|_0),\ldots, (S_\phi)_*(\partial_{\xi_d}|_0)\right\}$.

\begin{theorem}[Kazhdan connection] 
There is a natural isomorphism 
\[\omega_M(\phi_m)\colon T_{\phi_m}\tM\longrightarrow \g,\] for all $\phi_m\in\tM$, such that $\omega_M$ defines a 1-form in 
$\Omega^1\left(\tM\right)\otimes \g$ satisfying 
\begin{equation}\label{MC2}d\omega_M+\frac{1}{2}[\omega_M,\omega_M]=0,\end{equation} where $d$ denotes the exterior derivative.
\end{theorem}

See \cite{N-T95} for a proof. Now we note that, since $G_1\simeq F_1\g$ by the exponential map, there exists a section 
$\Sp_M\rightarrow \tM_r$. In fact one may prove that there exists an $\Sp(2d,\R)$ equivariant section. 

\begin{definition}
	Given an equivariant section $F\colon \Sp_M\rightarrow \tM_r$ we define the induced connection $\nabla_F$ on $\W$ as given by the connection form $(\iota\circ F)^*\omega_M\in \Omega^1(\Sp_M)\otimes \g$. Here $\iota\colon \tM_r\hookrightarrow \tM$ denotes the obvious inclusion. 
	\end{definition}

\begin{proposition} 
	Given an equivariant section $F\colon \Sp_M\rightarrow \tM_r$ the connection $\nabla_F$ is a Fedosov connection and the corresponding deformation quantization is gauge equivalent to $\AH(M)$.  
	\end{proposition} 

For the proof we refer again to \cite{thesis}. This concludes our summary of the relation between formal geometry and the Fedosov construction. 

\section{Automorphisms lifted from Symplectomorphisms} 

	In this section we will discuss the lifting problem associated to symplectomorphisms induced by automorphisms of deformation quantizations. Let 
$\nabla:=P_\g(A_{-1})+\nabla_0+P_\g(A_\N)$ be a fixed Fedosov connection and denote the corresponding deformation quantization by $\AH\left(M\right)$. If $\varphi$
is a $\C\hhbar$-linear algebra automorphism of $\AH\left(M\right)$ the assignment
$f\mapsto \varphi\left(f\right)\mod \hbar$ defines an algebra automorphism of $C^\infty\left(M\right)$. Thus there is a map 
\[\Aut\left(\AH\left(M\right)\right)\longrightarrow \Symp\left(M,\omega\right)^{op},\] given by noting that any automorphism of $C^\infty\left(M\right)$ is given (uniquely) by the pull-back 
under a diffeomorphism (see, for instance, Chapter 7 of \cite{JN}). The fact that the induced diffeomorphism is a symplectomorphism can be seen by applying it to a commutator.

\begin{definition} 
	Suppose $\varphi\in\Symp\left(M,\omega\right)$ we say $\alpha_\varphi\in \Aut\left(\AH\left(M\right)\right)$ lifts or extends $\varphi$ if 
	\[\alpha_\varphi\left(f\right)=\varphi^*f+\hbar \Phi_f,\]
	with $\Phi_f\in\AH$, for all $f\in C^\infty\left(M\right)\hhbar$. Here $\varphi^*$ denotes the $\C\hhbar$-linear extension of the pull-back $\varphi^*f=f\circ\varphi$.  
\end{definition}

Note that in this language a gauge equivalence of the deformation to itself is a lift of the identity. The following definition will be helpful.   

\begin{definition} 
	Note that $\Sp(2d,\R)$ acts on $G_1$ by conjugation. This yields a sheaf of groups $\cG_1$ given by the sections of $\Sp_M\times_{\Sps(2d)}G_1\rightarrow M$.
	Here $\Sp_M\times_{\Sps(2d)}G_1$ denotes the quotient of $\Sp_M\times G_1$ by the relation $(pg,\gamma)\sim(p,g\gamma g^{-1})$ for any $g\in\Sp(2d,\R)$. 
\end{definition}

The following proposition characterizes the symplectomorphisms that can be extended. 

\begin{proposition}\label{lifts}
	The map \[\Aut\left(\AH\left(M\right)\right)\longrightarrow \Symp\left(M,\omega\right)^{op}_{[\theta]}\] is a surjection. Here $[\theta]\in \CH^2(M)\hhbar$ denotes the 
	characteristic 
	class of $\AH\left(M\right)$ and\\ $\Symp\left(M,\omega\right)_{[\theta]}$ 
	denotes the stabilizer for the obvious (right) action on $\CH^2\left(M\right)\hhbar$. 
	
\end{proposition}
\begin{proof}
	
	\leavevmode
	
	\noindent Note that the lemma asserts that, for a symplectomorphism $\varphi$ to extend to the deformation, it is both a necessary and sufficient condition that it
	preserves the characteristic class of the deformation. So let us fix a symplectomorphism $\varphi$. 
	
	\vspace{0.3cm}
	
	We will begin by showing that, if there exists an automorphism $\alpha_\phi$ lifting $\phi$, then $\phi$ preserves the characteristic class. Let 
	$\AH\left(M\right)^\varphi$ denote the deformation quantization given by twisting the product $\star$ of $\AH\left(M\right)$ by $\varphi$, i.e. 
	\[f\star_\varphi g=\varphi_*\left(\varphi^*\left(f\right)\star\varphi^*\left(g\right)\right),\]
	where $\phi^*$ denotes the pull-back by $\phi$ and $\phi_*$ denotes the pull-back by $\phi^{-1}$. 
	We note that $\varphi_*$ is an algebra isomorphism from $\AH\left(M\right)$ to $\AH\left(M\right)^\varphi$. Now suppose there exists an extension $\alpha_\varphi$ of $\varphi$. Then we can 
	consider the composition $G_\varphi:=\varphi_*\circ \alpha_\varphi$. By a partition of unity argument (using the $\star$-product), we find that $\alpha_\varphi$ restricts to a map 
	$\AH\left(U\right)\rightarrow \AH\left(\varphi^{-1}\left(U\right)\right)$ for every open
	$U\subset M$. This shows that $G_\varphi$ must in fact be a gauge equivalence and thus the characteristic class of $\AH\left(M\right)^\varphi$ must be the same as the class of 
	$\AH\left(M\right)$ by proposition \ref{Fedisall} and theorem \ref{classification}. 
	On the other hand, it is not hard to see that $\star_\varphi$ is induced by the Fedosov connection $(\varphi^{-1})^*\nabla_A$ and thus we find that preserving $[\theta]$ is a necessary condition. 
	
	\vspace{0.3cm} 
	
	Suppose now that $\phi$ preserves the characteristic class of $\AH(M)$. Note that the pull-back $\phi^*$ is a well-defined algebra automorphism of $\Omega^\bullet\left(M;\W\right)$. Clearly, it is an isomorphism \[\phi^*\colon \Ker\nabla_A\stackrel{\sim}{\longrightarrow} \Ker\phi^*\nabla_A.\] We will show sufficiency, i.e. we will show that there exists a lift $\alpha_\phi$ of $\phi$, by constructing a map in the opposite direction given by a section 
	$U_\phi$ of $\cG_1$. This means that the composite map $U_\phi\circ \phi^*$ extends $\phi$. The construction of $U_\phi$ can also be found in \cite{N-T99} or section 5.5 of \cite{Fb}. The construction 
	of $U_\phi$ is by an iterative procedure. Note that, since $\phi$ preserves the characteristic class, we find that $\phi^*\nabla_A$ defines the same class. Moreover, since adding any central form to $\phi^*\nabla_A$ does not change its kernel, we may as well assume the curvatures (in the sense of notation \ref{curv}),  of the 
	connections $\nabla_A$ and $\phi^*\nabla_A$, are equal. 
	
	\vspace{0.3cm}

	We have fixed a lift of $\nabla_A$ to $\tg$ relative to $\nabla_0$ above, namely $A=\tilde{\sigma}(\nabla_A-\nabla_0)$, which equals $A_{-1}+A_{\N}$. 
	This provides the one-forms 
	\[r_k:=A_k-\phi^*A_k\hspace{0.2cm}\mbox{for}\hspace{0.2cm}k>0\hspace{0.5cm}\mbox{and}\hspace{0.3cm} r_0=\tilde{\sigma}\left(\nabla_0-\phi^*\nabla_0\right).\]
	Note that $r_k$ has values in $\tg_k\subset\tg$. Note also that the form $A_{-1}$ is a diffeomorphism 
	invariant (the $\omega$ in the definition cancels against the $\umega$ of the commutator in $\W$). Now, since the curvatures of $\nabla_A$ and $\phi^*\nabla_A$ are equal, we find for degree reasons that 
	\[P_\g(A_{-1})(r_0)=0.\] Now we note that $P_g(A_{-1})$ is an acyclic differential on $\Omega^\bullet(M;\W)$, which means that we may choose 
	$\delta_1\in \Omega^0\left(M,\tg_1\right)$ such that \[r_0=P_\g(A_{-1})(\delta_1).\] 
	Then, by proposition \ref{gG}, we can consider the automorphism $\Ad\exp(\delta_1):=\exp(P_g(\delta_1))$ of $\Gamma(\W)$, note that it is a section of $\cG_1$. We find that 
	\[\nabla_A-\Ad\exp(\delta_1)\circ\phi^*\nabla_A\circ\Ad\exp(-\delta_1)=0\hspace{0.1cm}\mbox{mod}\hspace{0.1cm} \g_{>0}.\]
	In other words if we replace $\phi^*\nabla_A$ by $\Ad\exp(\delta_1)\circ\phi^*\nabla_A\circ\Ad\exp(-\delta_1)$ we find that 
	$r_0=0$. This means for degree reasons that \[P_\g(A_{-1}(r_1)=0,\] so we know there exists $\delta_2$ such that 
	\[r_1=P_g(A_{-1})(\delta_2).\]
	Continuing like this we can iteratively define $\delta_i$ for all $i\in \N$. Since the 
	degrees 
	of the $\delta_i$ necessarily increase we find the well-defined section 
	\[U_\phi:=\Ad\ldots\exp\delta_i\exp\delta_{i-1}\ldots\exp\delta_2\exp\delta_1\] of $\cG_1$ such that 
	\begin{equation}\label{Fedcovariance}U_\phi\circ\phi^*\nabla_A\circ U_\phi^{-1}=\nabla_A.\end{equation} Note that this implies that 
	\[U_\phi\colon \Ker\phi^*\nabla_A\longrightarrow \Ker\nabla_A\] is a well-defined isomorphism. 
\end{proof}

\begin{remark}\label{groupac}
	Note that the proof of sufficiency in proposition \ref{lifts} provides a (highly non-unique) choice of section of the map from 
	algebra automorphisms to symplectomorphisms preserving the characteristic class $[\theta]$. However, it is not guaranteed  that such a section 
	is a group homomorphism. So let us consider, instead of a single symplectomorphism, a group $\Gamma$ of symplectomorphisms. In other words, suppose we have 
	a group $\Gamma$ acting (from the left) on $M$ by symplectomorphisms, i.e. a group homomorphism $\Gamma\rightarrow\Symp(M,\omega)$. Note that we have a map 
	\begin{equation}\label{groupaceq}
	\Hom\left(\Gamma^{op},\Aut\left(\AH(M)\right)\right)\longrightarrow \Hom\left(\Gamma,\Symp(M,\omega)_{[\theta]}\right),
	\end{equation}
	by proposition \ref{lifts} although we cannot be assured of surjectivity any longer.
\end{remark}
\begin{remark}\label{expmodcen} 
	Note that, by proposition \ref{gG}, we find that 
	$F_1\g=P_{\g}\left(\tg_{\geq1}\right)=P_\g\left(\frac{\W_{\geq3}}{i\hbar}\right)$ and $G_1=\exp F_1\g$. So we find that sections of $\cG_1$ can always locally be given as (conjugation by) exponentials of sections of $\frac{1}{i\hbar}\W_{\geq3}$. 
\end{remark}

Suppose $\alpha$ is an automorphism of $\AH(M)$, then it induces an automorphism of $\Gamma(\W)$ as follows. First we note that it induces isomorphisms
\[\hat{\alpha}_m\colon \widehat{A_{\phi(m)}(M)}\longrightarrow\widehat{A_m(M)}\] where 
$\phi$ is the symplectomorphism induced by $\alpha$. This induces in turn a diffeomorphism
$\hat{\alpha}\colon\tM\rightarrow \tM$ by pre-composition, which restricts to a diffeomorphism $\hat{\alpha}\colon \tM_r\rightarrow\tM_r$. Note that $\hat{\alpha}$ is 
$G_r$-equivariant. Thus we obtain an isomorphism $\hat{\alpha}\colon \tM_r\times_{G_r}\WW\rightarrow \tM\times_{G_r}\WW$.  Now we choose an $\Sp(2d,\R)$-equivariant section 
$F\colon\Sp_M\rightarrow\tM_r$. This means we obtain the trivialization 
$\tM_r\simeq \Sp_M\times G_1$ of the $G_1$-principal bundle $\tM_r\rightarrow \Sp_M$ and we have an isomorphism $\tM_r\times_{G_r}\WW\rightarrow \Sp_M\times_{\Sp(2d,\R)}\WW$. Thus we obtain the induced map $\hat{\alpha}\colon \W\rightarrow \W$. By definition of $\W=\Sp_M\times_{\Sp(2d)}\WW$. The following lemma shows that the ``form" of the lift obtained in the proof of \ref{lifts} is the only possibility.

\begin{lemma}\label{autoform}
	Suppose $\alpha_\phi$ is an automorphism of $\AH(M)$ lifting the symplectomorphism $\phi$, then 
	\[\widehat{\alpha_\phi}=c_\phi\circ\phi^*\] 
	for some $c_\phi\in\cG_1(M)$. 
\end{lemma}

The lemma says that the induced morphism on $\W$ is always a composition of a global part given by $\phi$ and a local (nearly inner) part $c_\phi$. In fact the decomposition of $\widehat{\alpha_\phi}$ mirrors the decomposition of $G$ given in \ref{gG}. 

\begin{proof}
	\leavevmode
	
	\noindent 
	
	We should prove that $C_\phi=\widehat{\alpha_\phi}\circ\phi_*$ defines a section of $\cG_1(M)$. This follows easily from the fact that $G/G_1\simeq\Sp(2d,\R)$ and $c_\phi$ covers 
	the identity $M\rightarrow M$.
\end{proof} 

In order to apply the Fedosov construction in the context of group actions we need to ask for a certain compatibility between the group action and a Fedosov connection. 
Combining this with the previous proposition motivates the following definition. 

\begin{definition}\label{extac}
	An extension of the left action of $\Gamma$ on $M$ by symplectomorphisms is defined as a right action $\alpha\colon \Gamma^{op}\rightarrow \Aut\left(\AH\left(M\right)\right)$ such 
	that  
	
	\begin{equation}\label{extendaction}\alpha_\gamma=c_\gamma\circ\gamma^*,\end{equation}
	for $\gamma\in\Gamma$ and where $c_\gamma\in\cG_1(M)$, such that 
	\[c_\gamma\circ\gamma^*\nabla_A\circ c_\gamma^{-1}=\nabla_A\] for some Fedosov connection $\nabla_A$ such that $\Ker\nabla_A\simeq \AH(M)$. 
	Here we have denoted the symplectomorphism corresponding to $\gamma\in\Gamma$ by $\gamma$ also. 
	
\end{definition}

Note that the definition makes sense since we can identify $\AH(M)\simeq\Ker\nabla_A$ and the 
condition on $c_\gamma$ ensures that $\alpha_\gamma(\Ker\nabla_A)=\Ker\nabla_A$. 

\begin{remark}\label{why}
	In \cite{Ginvariant,RW} and other sources the authors consider only those extensions where the $c_\gamma$ are trivial. These are of course the most natural 
	extensions (if they exist). The reason to consider not only the most natural extensions of the action to the deformation quantization is 
	that two equivalent (Fedosov) star products that allow such a natural action are not necessarily equivariantly equivalent. This means that, when one transports the 
	natural group action from one  to the other (using the equivalence) it will not be of 
	this most natural form. However, there is  in general no clear reason to prefer one (Fedosov) star product over the other, when one starts from a given characteristic 
	class. Thus, when one 
	considers all extensions, as defined in definition \ref{extac}, one becomes free to consider an arbitrary Fedosov connection with curvature in the characteristic class. 
	Another, more obvious, reason is that there may be cases where extensions as in definition \ref{extac} exist while the extension with trivial $c_\gamma$ does not. 
\end{remark}

\begin{remark}\label{comp?}
	Note that the condition of compatibility with the Fedosov connection will be superfluous in many cases, see \cite{Fp}. In particular given a symplectomorphism 
	that preserves not only the class $[\theta]$, but also a representative $\theta$ of that class we can always lift it in a way compatible with a Fedosov connection. 
\end{remark}

\section{Existence of Extensions of Group Actions}\label{4} 
In this section we investigate the problem of lifting a group action $\Gamma\rightarrow\Symp\left(M,\omega\right)_{[\theta]}$ to a group action 
$\Gamma^{op}\rightarrow\Aut\left(\AH\left(M\right)\right)$. The question of existence of lifts is exactly the question of surjectivity of the 
map \eqref{groupaceq}. We will rephrase the question in a rather compact form. We will also give some sufficient conditions for the existence of a lift and show 
that they are not necessary in general. 

\vspace{0.3cm} 

So, let us fix the group action $\Gamma\rightarrow\Symp\left(M,\omega\right)_{[\theta]}$. 
As mentioned above, we can, as in proposition \ref{lifts}, always find a map 
$\alpha\colon\Gamma^{op}\rightarrow \Aut\left(\AH\left(M\right)\right)$ such that $\alpha_\gamma\mapsto \gamma\in\Symp\left(M,\omega\right)_{[\theta]}$ 
by the map mentioned in the previous section. Note also that the proof of proposition \ref{lifts} shows that 
$\alpha_\gamma:= c_\gamma\circ\gamma^*$, where $ c_\gamma\circ \gamma^*\nabla_A\circ c^{-1}_\gamma=\nabla_A$, for some section $c_\gamma$ of $\cG_1$ and 
a given Fedosov connection $\nabla_A$. The following observation is also present in Fedosov's paper \cite{Fp}.
We have 
\[\alpha_\gamma\circ \alpha_\mu=c_\gamma\gamma\left(c_\mu\right)\circ\left(\mu\gamma\right)^*,\]
where the action of $\Gamma$ on sections of $\cG_1$ is by conjugation, i.e. $\gamma(c)=\gamma^*\circ c \circ \gamma_*$. On the other hand 
$\alpha_{\mu\gamma}=c_{\mu\gamma}\circ\left(\mu\gamma\right)^*$. So we find that the $c_\gamma$ should satisfy a cocycle condition 
in order for $\alpha$ to be a group homomorphism. Indeed, $\alpha$ defines a group action exactly when 
\begin{equation}\label{cocyclecond}c_\gamma\gamma\left(c_\mu\right)c_{\mu\gamma}^{-1}=\Id\end{equation} for all $\gamma,\mu\in \Gamma$.

\begin{corollary}\label{cocycllllllllll} 
	The action $\Gamma\rightarrow \Symp\left(M,\omega\right)_{[\theta]}$ lifts to an action on the deformation quantization in the sense of definition \ref{extac} 
	iff there exist sections $c_\gamma$ of $\cG_1$, such that $ c_\gamma\circ \gamma^*\nabla_A\circ c^{-1}_\gamma=\nabla_A$
	for all $\gamma\in \Gamma$ and some Fedosov connection $\nabla_A$ (such that $\Ker\nabla_A\simeq \AH(M)$), that form a cocycle.  
\end{corollary}

\begin{remark}
	Although it is in general not easy to check the cocycle condition \eqref{cocyclecond}, more can be said about the question of existence in more general terms. As noted in
	\cite{Fb}, one can construct a Fedosov connection $\nabla_A$ with characteristic class $[\theta]$
	\emph{uniquely}, given a representative $\theta\in\Omega^2(M)\hhbar$ and a symplectic torsion-free connection. Moreover, given an invariant linear 
	connection, one can construct an invariant 
	torsion-free symplectic connection in a canonical way, this is done (again) by the ``Hesse trick" also applied in \cite{PPT2}.   
\end{remark} 

\begin{proposition}\label{trivialextensions}
	Suppose there exists an invariant linear connection $\nabla_{00}$ on $M$, i.e. such that $\gamma^*\nabla_{00}=\nabla_{00}$ for all $\gamma\in\Gamma$. Then the action extends to any 
	deformation quantization which has characteristic class in the image of the map 
	\begin{equation}\label{equivmap}\frac{\omega}{i\hbar}+\CH^2_\Gamma\left(M\right)\hhbar\longrightarrow \frac{\omega}{i\hbar}+\CH^2\left(M\right)^\Gamma\hhbar,\end{equation}
	where we denote the cohomology of the complex $\left(\Omega^\bullet\left(M\right)^{\Gamma}\hhbar,d\right)$ by $\CH^\bullet_\Gamma\left(M\right)\hhbar$ and 
	we denote the invariants by a superscript $\Gamma$. 
\end{proposition}
\begin{proof} 
	
	\leavevmode
	
	\noindent The proposition follows immediately from the remark above and is also noted in \cite{Fb}. For the situation as described in the lemma it is possible to construct a Fedosov 
	connection $\nabla_A$ 
	such that $\gamma^*\nabla_A=\nabla_A$ for all $\gamma\in\Gamma$. Then $\gamma^*$ defines an automorphism of $\Ker\nabla_A$ and thus $\alpha_\gamma=\gamma^*$ extends the action. 
\end{proof}

\begin{remark}
	Note that the class $\frac{\omega}{i\hbar}+[\omega]$ (and similar classes) will always be in the image of the map stated above. Note also that proposition
	\ref{trivialextensions} is comparable to the results in \cite{Ginvariant}, there the authors consider more restrictive extensions (i.e. $\alpha_\gamma=\gamma^*$) and 
	show that any class in the left hand cohomology induces an \emph{invariant} star product up to \emph{equivariant} equivalence. These results where extended to 
	include quantum moment maps fairly recently in \cite{RW}. 
\end{remark}

\begin{corollary}\label{finiteistrivial}
	Suppose $\Gamma$ is a compact Lie group,  
	then any (smooth) action of $\Gamma$ by symplectomorphisms can be extended to any deformation quantization by $\alpha_\gamma=\gamma^*$. 
\end{corollary}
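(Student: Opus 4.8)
The plan is to verify, using compactness of $\Gamma$, the two hypotheses of Proposition \ref{trivialextensions}, and then simply read off the conclusion. Throughout I would use that a compact Lie group carries a normalized bi-invariant Haar measure $d\gamma$ and that the action is smooth, so that averaging over $\Gamma$ of any smoothly varying family of tensors produces a smooth $\Gamma$-invariant tensor. First I would produce a $\Gamma$-invariant affine connection. Since the affine connections on $M$ form an affine space modelled on $\Omega^1(M,\End(TM))$, for any fixed connection $\nabla^{(0)}$ the difference $\gamma^*\nabla^{(0)}-\nabla^{(0)}$ is a genuine tensor field depending smoothly on $\gamma$; hence $\nabla_{00}:=\nabla^{(0)}+\int_\Gamma(\gamma^*\nabla^{(0)}-\nabla^{(0)})\,d\gamma$ is again an affine connection, and a standard computation using invariance of $d\gamma$ gives $\mu^*\nabla_{00}=\nabla_{00}$ for all $\mu\in\Gamma$. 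This supplies the invariant connection required by Proposition \ref{trivialextensions}.

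Second I would show that the map \ref{equivmap} is surjective, so that the requirement ``characteristic class in the image'' is automatic for every $\Gamma$-invariant class. Given a closed form $\alpha$ whose class $[\alpha]\in\CH^2(M)\hhbar$ is $\Gamma$-invariant, its average $\bar\alpha:=\int_\Gamma\gamma^*\alpha\,d\gamma$ is closed (since $d$ commutes with pullback and with the integral) and $\Gamma$-invariant (by invariance of $d\gamma$). Normalization of the measure together with $\gamma^*[\alpha]=[\alpha]$ yields $[\bar\alpha]=\int_\Gamma\gamma^*[\alpha]\,d\gamma=[\alpha]$, so $[\alpha]$ lies in the image of \ref{equivmap}. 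Extending $\C\hhbar$-linearly, order by order in $\hbar$, gives surjectivity of \ref{equivmap} onto all of $\CH^2(M)^\Gamma\hhbar$.

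Finally I would conclude. For the action to extend at all the characteristic class must be $\Gamma$-stable by Lemma \ref{lifts}, so the relevant deformation quantizations are exactly those with $\Gamma$-invariant class, and by the previous paragraph every such class lies in the image of \ref{equivmap}. Proposition \ref{trivialextensions} then produces a $\Gamma$-invariant Fedosov connection $\nabla$ (built from the invariant symplectic connection associated to $\nabla_{00}$ and an invariant representative of the class, invariance being inherited through the uniqueness in the construction of $\nabla_\theta$), for which $\gamma^*$ preserves $\ker\nabla\simeq\AH(M)$, so that $A_\gamma=\gamma^*$ is the desired extension. The only step demanding genuine care is the surjectivity of \ref{equivmap}: one must check that averaging a closed form over $\Gamma$ stays in the same cohomology class, which is precisely where normalization of the Haar measure and $\Gamma$-invariance of the class enter. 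Everything else is formal once the averaging operator is in place.
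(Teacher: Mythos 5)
Your proposal is correct and follows essentially the same route as the paper: averaging an arbitrary affine connection over the normalized Haar measure to obtain the invariant connection, and averaging a representative of the (necessarily $\Gamma$-stable) characteristic class to land in the image of the map \ref{equivmap}, then invoking Proposition \ref{trivialextensions}. The paper compresses this into one sentence; your version merely makes explicit the points it leaves implicit, namely that $[\bar\alpha]=[\alpha]$ requires invariance of the class and normalization of the measure, and that by Lemma \ref{lifts} only invariant classes are relevant.
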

\begin{proof} 
	
	\leavevmode 
	
	\noindent By averaging an arbitrary linear connection one obtains an invariant linear connection and by averaging an arbitrary representative of a characteristic class one obtains a
	representative of 
	the pre-image in $\frac{\omega}{i\hbar}+\CH^2_\Gamma\left(M\right)\hhbar$. 
\end{proof}

\begin{remark}\label{extra}
	Note that the proof of corollary \ref{finiteistrivial} actually also applies to the case where $\Gamma$ is not itself a compact Lie group, but the action of $\Gamma$ factors through the (smooth) action of a compact Lie group.
\end{remark}

\begin{remark} 
	It is possible to phrase the first condition in proposition \ref{trivialextensions} in terms of a cohomological obstruction in the following way. Consider the right-action of $\Gamma$ on 
	$\Omega^1\left(M,\End\left(TM\right)\right)$ by pull-back. Given an affine connection $\nabla_{00}$, we define 
	\[\hspace{0.3cm}D\colon \Gamma \longrightarrow \Omega^1\left(M,\End\left(TM\right)\right)\] 
	\[\gamma\longmapsto \gamma^*\nabla_{00}-\nabla_{00}\]
	Note that this defines a group cocycle in the group cohomology complex of $\Gamma^{op}$ with values in the vector space 
	$\Omega^1\left(M,\End\left(TM\right)\right)$. Clearly, if $\nabla_{00}$ is 
	invariant we find that $D=0$. It is easy
	to check that the class of $D$ in $\CH^1\left(\Gamma^{op},\Omega^1\left(M,\End\left(TM\right)\right)\right)$ does not depend on $\nabla_{00}$ and in fact $[D]=0$ if and only if
	there exists an invariant affine connection 
	on $M$. The class defined above can be identified as a certain notion of the Atiyah class \cite{AC2}.
\end{remark}

Proposition \ref{trivialextensions} gives a sufficient condition for the existence of a lift of a group action. It includes many group actions on manifolds, 
so a natural question is whether it is possible to extend actions that do not satisfy the hypotheses 
of the proposition. In other words, one might wonder if there exist actions that allow extension, but do not satisfy the hypotheses of proposition 
\ref{trivialextensions}, i.e. one wonders whether the conditions of proposition \ref{trivialextensions} are necessary.

\begin{example}\label{worthexample}
	Consider the cotangent bundle $\left(T^*S^2, d\eta\right)$ of the $2$-sphere with the canonical symplectic structure $d\eta$. Let $\Gamma$ be the group of orientation preserving 
	diffeomorphisms of $S^2$ that fix the equator $Eq\colon S^1\hookrightarrow S^2$ pointwise. 
	It is an elementary fact of symplectic geometry, found for instance in \cite{ACDS}, that the group of diffeomorphisms of a manifold lifts to symplectomorphisms of the corresponding 
	cotangent bundle. Note that \[\CH^2\left(T^*S^2\right)\hhbar=\C\hhbar\pi^*[\omega],\] where $\pi$ denotes the projection $T^*S^2\rightarrow S^2$ and $\omega$ denotes the standard 
	symplectic structure of $S^2$. Since $\Gamma$ consists of orientation preserving diffeomorphisms we find that the class $\pi^*[\omega]\in \CH^2\left(T^*S^2\right)^\Gamma$ is invariant. 
	On the other hand, if $\beta$ is any $1$-form 
	on $T^*S^2$ such that 
	$\pi^*\omega+d\beta\in\Omega^2\left(T^*S^2\right)^\Gamma$, then we find 
	\[\omega+d\left(z^*\beta\right)=z^*f^*_\sharp\left(\pi^*\omega+d\beta\right)= f^*z^*\left(\pi^*\omega+d\beta\right)=f^*\left(\omega+dz^*\beta\right)\] for all $f\in\Gamma$, here we have 
	denoted the zero-section of 
	$T^*S^2\rightarrow S^2$ by $z$ and the symplectomorphism of $T^*S^2$ induced by $f$ by $f_\sharp$. However, note that any non-zero two-form on $S^2$ will have support on some open $U$ which is disjoint 
	from $Eq(S^1)$ and subject to a multitude of ``local'' diffeomorphisms in $\Gamma$. So since $\omega+dz^*\beta$ is invariant under all such diffeomorphisms we find that it vanishes identically,
	but this implies that $[\omega]=0$. Thus we are led to a contradiction, which shows that $[\omega]$ is not in the image of the map \eqref{equivmap}, in fact this map is $0$. 
	Thus this action does not satisfy the criteria of proposition \ref{trivialextensions} and there does not exist any invariant Fedosov connection with non-trivial characteristic class for this action. 
	On the other hand, we note that, by considering the clutching construction of vector bundles on $S^2$, we can lift the action of $\Gamma$ to an action on the line bundle corresponding to the 
	inclusion $S^1\hookrightarrow \C^\times$ (which has Chern class $[\omega]$). This means we can extend the action of $\Gamma$ to differential operators on smooth sections of this line bundle. 
	Thus by the results in \cite{repline} we find that the action of $\Gamma$ does lift to the deformation quantization with characteristic class $\hbar[\pi^*\omega]$.

\end{example}

By example \ref{worthexample}, we see that although the conditions of proposition \ref{trivialextensions} are sufficient to conclude existence of extended group actions (as defined in definition \ref{extac}), they 
are not necessary. 

	\vspace{0.3cm}

It is notable that by the results in \cite{repline} the question of lifting an action to deformation quantization reduces, in the case of deformations of cotangent bundles with 
characteristic classes as in Example \ref{worthexample}, to lifting the action from the manifold to a line bundle with the corresponding Chern class. This last problem essentially comes down to 
showing that the Chern class extends to an equivariant class. Thus the condition \eqref{cocyclecond} seems to imply there is a deformation quantization analog of equivariant cohomology for which the 
extension of the characteristic class to an equivariant class is a necessary and sufficient condition for the action to extend to the corresponding deformation.

\section{Classification of Extended Group Actions}

In this section we will turn to the question of classification of extended group actions. 
We will show that extended group actions are classified, up to a technical condition, by the first cohomology of the group $\Gamma$ with values in a certain non-Abelian group $\GZ$. We will finish the classification, by first providing the essential tools for computing this first cohomology and subsequently considering some examples. The classification will be carried out relative to a given extended group action. So for this section we will fix a Fedosov connection $\nabla$ (dropping the subscript $A$) and 
an extended action $\alpha\colon \Gamma^{op}\rightarrow \Aut\left(\AH\left(M\right)\right)$. 

\subsection{Abstract Classification}\label{5.3.1}

In this section we will give a classification of the lifts of group actions as in definition \ref{extac} in abstract terms, up to a certain technical condition. We will provide methods of computation of 
these abstract objects in the next section. To do this, we will first, following Fedosov \cite{Fp, Fb}, introduce an extension of the Weyl algebras bundle 
such that a subgroup of the group of invertible sections surjects locally onto the sections of $\cG_1$ in a natural way. This will allow us to understand the sheaf 
$\cG_1$ better, in order to provide tools for computation in the following section and also define a subgroup of the total sections providing the abstract classification.

\begin{definition}\label{W+}
	Consider the algebra $\WW_{(\hbar)}:=\WW_\hbar\otimes \C[\hbar^{-1},\hbar]\!]$ where the tensor product is over $\Ch$, i.e. $\WW_{(\hbar)}=\WW_\hbar[\hbar^{-1}]$. Note that $\WW_{(\hbar)}$ carries a grading induced by the grading of $\WW_\hbar$, i.e. $|\hbar^{-1}|=-2$. We define the algebra $\WW_\hbar^+\subset \WW_{(\hbar)}$ as the subalgebra $F_0\WW_{(\hbar)}$ of elements with degree greater than $0$. In other words we 
	allow power series with negative powers of $\hbar$ as long as the total degree is still greater than $0$.
	Similarly, we denote the bundle associated to $\Sp_M$ with fibers 
	given by $\WW_\hbar^+$ by $\W^+$ and the bundle associated to $\Sp_M$ with fibers given by $\WW_{(\hbar)}$ by $\W^f$.
\end{definition}

\begin{remark}\label{bundlemark}
	Note that the Fedosov connection $\nabla$ is well-defined on the bundle $\mathcal{W}^f$. We will denote the center of $\W^+$ and $\W$ by $\cZ$.
	Note that $\cZ\simeq C_M^\infty\hhbar$ by the inclusion $\Ch\hookrightarrow \WW_\hbar^+$. Here  $\W$, $\cZ$, $C^\infty_M$ and so on denote sheaves (it should be obvious which sheaves). 
\end{remark}

\begin{definition}\label{T_F} 
	We define the sheaf of \emph{fiberwise transformations} by assigning the sections of $\mathcal{W}^+(U)$ which are given by 
	exponentials of elements of $\left(\frac{1}{i\hbar}\mathcal{W}(U)\right)_{\geq1}$ to the open subset $U$. We will denote this sheaf by $\TF$.
	Note that $\TF$ is a sheaf of groups by the Campbell-Baker-Hausdorff formula.
\end{definition}

\begin{remark}\label{reltoG} 
	Note that there is map \[\Ad\colon\TF\longrightarrow \cG_1\] given by assigning to the section $E$ of $\TF$ the automorphism of 
	conjugation by $E$. Note that, by the proposition \ref{gG}, we find that $\Ad$ is locally surjective. 
\end{remark}

Suppose that, for $E_\phi, E'_\phi\in \TF$, $U_\phi:=\Ad E_\phi$ and $U_\phi':=\Ad E'_\phi$ both satisfy \eqref{Fedcovariance}. Then we find that, 
denoting $E=E'_\phi E^{-1}_\phi$, \begin{equation}\label{fedinvar}\Ad E\circ\nabla\circ\Ad E^{-1}=\nabla.\end{equation} Conversely, if 
$U_\phi$ satisfies \eqref{Fedcovariance} and $E$ satisfies 
\eqref{fedinvar}, then clearly $U_\phi':=\Ad E\circ U_\phi$ also satisfies \eqref{Fedcovariance}. 

\begin{remark}\label{tech} 
	The discussion above suggests the following technical condition on the kind of actions we allow. Namely, we will only consider those actions that are of the form 
	\[\gamma\mapsto \Ad E_\gamma\circ\alpha_\gamma,\]
	with $E_\gamma\in\TF$ for all $\gamma\in\Gamma$. 
\end{remark}

\begin{remark}\label{why?}
	It serves now to compare the technical conditions in definition \ref{extac} and remark \ref{tech}. First of all we note that they are in fact compatible. 
	Secondly we compare them to the notion of isomorphism (and automorphism) of quantum algebras as defined in \cite{Fb} section 5.5. We note that the condition 
	in definition \ref{extac} is in fact weaker than the condition of a connection preserving isomorphism in \cite{Fb}. On the other hand, the condition in remark 
	\ref{tech} is, in the terms of \cite{Fb}, exactly the condition that the action is given, relative to $\alpha_\gamma$, by a fiberwise automorphism preserving the connection $\nabla$. Thus, if we use the definition of automorphism in section 5.5 of \cite{Fb}, the only technical condition is preservation of the Fedosov 
	connection $\nabla$. The reason, also offered in \cite{Fb}, to consider these kinds of actions seriously is that they correspond to the time
	evolution operators through Heisenberg's equations of motion.
	\end{remark}

\begin{definition} 
	We define \emph{Fedosov's fiberwise $\nabla$ preserving isomorphisms} by  
	\[\G:=\left\{E\in\TF\mid\nabla\left(E^{-1}\right)E\in\Omega^1\left(M\right)\hhbar\right\}.\] 
	
\end{definition}
We should mention that, although it is not presented quite in this form, a lot of the following (excluding the classification) is implicit in the work of Fedosov \cite{Fb, Fp}. We should justify the nomenclature of $\G$.

\begin{lemma} 
	The section $E\in \TF$ satisfies \eqref{fedinvar} if and only if $E\in\G$. 
\end{lemma}
\begin{proof} 
	
	\leavevmode
	\noindent  Suppose $E\in\TF$ satisfies 
	\eqref{fedinvar}, then for all $\sigma\in\W$ we find 
	\[\nabla\sigma=E^{-1}\left(\nabla\left(E\sigma E^{-1}\right)\right)E=\nabla\sigma+[E^{-1}\nabla E,\sigma],\] which implies $E\in\G$, since 
	$E^{-1}\nabla E=-\nabla(E^{-1})E$. The equation above also shows the converse statement. 
\end{proof}

\begin{notation} 
	We denote the group of automorphisms of $\W$ that are given by sections of $\TF$ and induce automorphisms of $\Ker \nabla$ by $\Loc\left(\W\mid\nabla\right)$. 
\end{notation}

\begin{lemma}\label{Gstuff}
	\leavevmode
	\begin{description}
		\setlength{\itemsep}{.01mm}
		\item[(i)]  $\G$ is a subgroup of the invertibles $\left(\W^+\right)^\times$ of $\W^+$. 
		\item[(ii)] $\mathcal{Z}^\times\triangleleft\G$ and $\left(\Ker\nabla\right)^\times\triangleleft\G$. 
		\item[(iii)] $\bigslant{\G}{\mathcal{Z}^\times}\simeq\Loc\left(\W\mid\nabla\right)$.
		\item[(iv)] $\G$ forms a sheaf of groups on $M$.
		\item[(v)]  For any open $V\subset M$ such that $\CH^1\left(V\right)=0$ we have 
		\begin{equation}\label{decompG} \G|_V=\cZ^\times|_V\cdot\AH(V)^\times\end{equation}
		\item[(vi)] $\alpha_\gamma\left(\G\right)\subset \G$ for all $\gamma\in\Gamma$. 
	\end{description}
	
\end{lemma}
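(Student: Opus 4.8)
The plan is to work throughout with the characterization supplied by the lemma immediately preceding the statement: an invertible section $U\in\Gamma(\W)^\times$ lies in $\G$ if and only if $\Ad U$ commutes with $\nabla$ (equivalently, $U$ satisfies \ref{fedinvar}). Almost everything then reduces to one technical fact, which I would isolate and prove first as a key sublemma: \emph{the centralizer of $\Ker\nabla$ inside $\Omega^\bullet(M,\W)$ is exactly $\Omega^\bullet(M,\mathcal{Z})$}. The reason is that at each point $x$ the values $\{s(x):s\in\Ker\nabla\}$ of flat sections are dense in the fibre $\W_x$: by Borel's lemma any $\infty$-jet at $x$ is realized by a global function $f$, and the Fedosov quantization $Q(f)$ is unipotent and upper-triangular in the tensor grading, so $Q(f)(x)$ sweeps out the dense (finite tensor degree) part of $\W_x$. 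Since the center of $\W_x\simeq\WW_\hbar$ is $\C\hhbar$, any $\theta$ with $[\theta,\sigma]=0$ for all flat $\sigma$ must take fibrewise central values, i.e. lie in $\Omega^\bullet(M,\mathcal{Z})$. This density statement is the one place where genuine Fedosov geometry (triangularity of $Q$ plus Borel) is needed, and it is the main obstacle; I would want it stated carefully before deducing (i)--(vi).

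Granting it, (i), (ii) and (iv) are quick. For (i) the set of $U$ with $[\Ad U,\nabla]=0$ is a subgroup because $\Ad(UV)=\Ad U\circ\Ad V$ and $\Ad(U^{-1})=(\Ad U)^{-1}$ (one may equally check directly that $\nabla((UV)^{-1})(UV)=\nabla(V^{-1})V+V^{-1}(\nabla(U^{-1})U)V$ stays in $\Omega^1(M,\mathcal{Z})$ by centrality). For (ii), central invertibles satisfy $\Ad Z=\Id$, so $\mathcal{Z}^\times\subset\G$ and it is normal, being already central in $\Gamma(\W)^\times$; an invertible flat section $K$ satisfies $\nabla(K^{-1})K=0$, so $(\Ker\nabla)^\times\subset\G$, and normality follows since for $U\in\G$ one has $\nabla(UKU^{-1})=\Ad U(\nabla K)=0$. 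Statement (iv) is formal: membership in $\G$ is cut out by the pointwise condition that $\nabla(U^{-1})U$ take central values, so $\G$ is the subsheaf of the sheaf $\Gamma(\W)^\times$ defined by this local condition and inherits the sheaf axioms.

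For (iii) I would study the homomorphism $U\mapsto\Ad U$ from $\G$ to $\Inn(\Gamma(\W))$. Its kernel is $\G\cap\mathcal{Z}^\times=\mathcal{Z}^\times$, and by the commutation property its image lies in $\Inn(\Gamma(\W)|\Ker\nabla)$, so the first isomorphism theorem yields an injection $\G/\mathcal{Z}^\times\hookrightarrow\Inn(\Gamma(\W)|\Ker\nabla)$. Surjectivity is exactly where the key sublemma enters: if $\Ad U$ preserves $\Ker\nabla$, then writing $\beta=U^{-1}\nabla U$ a short computation gives $\nabla(\Ad U\,\sigma)-\Ad U(\nabla\sigma)=\Ad U([\beta,\sigma])$, so for flat $\sigma$ one gets $[\beta,\sigma]=0$ on $\Ker\nabla$; the sublemma forces $\beta\in\Omega^1(M,\mathcal{Z})$, i.e. $U\in\G$, and since $\mathcal{Z}^\times\subset\G$ one may freely choose the representative.

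For (v), given $U\in\G$ on $V$ with $\CH^1(V)=0$, set $\beta=U^{-1}\nabla U\in\Omega^1(V,\mathcal{Z})$. Because $\nabla$ restricts to the de Rham differential on the scalar center $\mathcal{Z}\simeq C^\infty(V)\hhbar$ and $\beta$ is central, flatness $\nabla^2U=0$ gives $U\,d\beta=0$, hence $d\beta=0$; as $\CH^1(V)\hhbar=0$ I may write $\beta=d\zeta$ with $\zeta\in C^\infty(V)\hhbar$. Then $Z:=e^\zeta\in\mathcal{Z}^\times$ satisfies $\nabla Z=Z\beta$, and $K:=Z^{-1}U$ satisfies $\nabla K=0$, so $U=ZK\in\mathcal{Z}^\times\cdot(\Ker\nabla^\times)$; the reverse inclusion is (i)--(ii). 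Finally for (vi) the crux is that $A_\gamma$ commutes with $\nabla$: the operator $\nabla':=A_\gamma^{-1}\circ\nabla\circ A_\gamma$ is again a connection with $\Ker\nabla'=\Ker\nabla$ (since $A_\gamma$ is a sheaf automorphism of $\AH=\Ker\nabla$), so $\nabla'-\nabla=[\theta,-]$ with $[\theta,\sigma]=0$ on $\Ker\nabla$, and the key sublemma gives $\theta\in\Omega^1(M,\mathcal{Z})$, whence $\nabla'=\nabla$. Then for $U\in\G$ the automorphism $\Ad(A_\gamma U)=A_\gamma\circ\Ad U\circ A_\gamma^{-1}$ is a composite of three operators each commuting with $\nabla$, so it commutes with $\nabla$ and $A_\gamma U\in\G$.
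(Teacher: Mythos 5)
Your proof is correct and follows essentially the same route as the paper: everything hinges on identifying the centralizer of $\Ker\nabla$ in $\Gamma\left(\W\right)$ with $\mathcal{Z}$, which the paper gets by producing, via parallel transport, a flat section through any prescribed fibre value, while you produce a dense set of fibre values via Borel's lemma and the triangularity of $Q$ --- either suffices. The remaining items ((i), (ii), (iv), (v), and the reduction of (iii) and (vi) to that centralizer statement) match the paper's arguments, with (vi) only superficially repackaged as ``$A_\gamma$ commutes with $\nabla$''.
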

\begin{proof} 
	
	\leavevmode
	
	\noindent ``(i)'' Suppose $E,B\in\G$, we should show that $E^{-1}$ and $EB$ are also in $\G$. We have $E^{-1}\in\G$, since 
	\[E^{-1}\nabla E=\Ad E\left(E^{-1}\nabla E\right)=\left(\nabla E\right)E^{-1}=-E\nabla E^{-1}.\] Similarly, we have  
	$EB\in\G$, since \[B^{-1}E^{-1}\nabla EB=\Ad B^{-1}\left(E^{-1}\nabla E\right)+B^{-1}\nabla B=E^{-1}\nabla E+B^{-1}\nabla B.\]
	``(ii)'' Since $\mathcal{Z}^\times$ is central we see that $\nabla z=dz$ for all $z\in\mathcal{Z}^\times$, this shows that $\mathcal{Z}^\times$ is a subgroup. It is 
	a normal subgroup because it is central. Of course $\nabla k=0$ for all $k\in\left(\Ker\nabla\right)^\times$ showing that it is a subgroup. It is normal since 
	\begin{equation}\label{kernormal}\nabla\left(EkE^{-1}\right)=\nabla\left(E\right)kE^{-1}+Ek\nabla E^{-1}=\Ad E\left([E^{-1}\nabla\left(E\right),k]\right)=0\end{equation} for all $k\in\left(\Ker\nabla\right)^\times$ and $E\in\G$.\\
	``(iii)'' Consider the group homomorphism \[\Ad\colon\G\longrightarrow\Loc\left(\W\mid\nabla\right)\] given by restricting the group homomorphism 
	$\Ad\colon \TF\rightarrow \cG_1$. Note that it is well-defined, since \ref{kernormal} also holds for 
	$k\in\Ker\nabla$. Suppose $\alpha=\Ad B\in\Loc\left(\W\mid\nabla\right)$, then we find that 
	\[0=\Ad B^{-1}\circ\nabla\circ\Ad B\left(k\right)=[B^{-1}\nabla\left(B\right),k]\] for all $k\in\Ker\nabla$. So $\nabla\left(B\right)B^{-1}$ is in the centralizer of $\Ker\nabla$. 
	Given $k_x\in\W_x$ at some $x\in M$ we can (by parallel
	transport) always find a section $k\in\Ker\nabla$ such that $k\left(x\right)=k_x$. This shows that the centralizer of $\Ker\nabla$ is simply the center and so $B\in\G$. 
	Thus, $\Ad$ is surjective 
	and, since $\Ker\Ad=\mathcal{Z}^\times$ (another proof of the first part of (ii)), we find that (iii) holds. \\
	``(iv)'' This should be clear from the definition of $\G$. \\
	``(v)'' Suppose $V\subset M$ such that $\CH^1\left(V\right)=0$. Then suppose $E\in\G|_V$ and denote the implied central one-form by $\beta=E^{-1}\nabla E$. Note that $\nabla E=\beta E$. We have 
	\[d\beta=\nabla\beta=\nabla\left(\nabla\left(E\right)E^{-1}\right)=\beta\wedge\beta=0.\] 
	So, since $\CH^1\left(V\right)=0$, we find that $\beta=d\alpha$ for some $\alpha\in C^\infty\left(V\right)\hhbar$. Then \[\nabla\left(e^{-\alpha}E\right)=-d\alpha e^{-\alpha}E+e^{-\alpha}\beta E=0,\] 
	so we find that 
	$E=e^{\alpha}e^{-\alpha}E$, i.e. $E$ is a product of a central section $e^\alpha$ and a flat section $e^{-\alpha}E$. Finally we note that, if 
	$a_f\in\W^+$ is flat, then it cannot contain any negative powers of $\hbar$, since it is uniquely determined by its image under 
	$\WW_\hbar^+\rightarrow \Ch$. So we see that $\Ker\nabla\simeq \AH$ still holds when we consider $\nabla$ as acting 
	on $\W^+$. \\
	``(vi)'' Suppose $\gamma\in\Gamma$ and $E\in\G$, then $\Ad \alpha_\gamma\left(E\right)=\alpha_\gamma\circ\Ad E\circ \alpha_\gamma^{-1}\in\Loc\left(\W\mid\nabla\right)$. 
	So, by (iii) and the fact 
	that the $\alpha_\gamma$ define automorphisms of $\Ker\nabla$ and of $\cZ$, we have $\alpha_\gamma\left(E\right)\in\G$.  
\end{proof}

\begin{notation}
	From now on we will be considering group cohomology. Since we are considering right actions, i.e. $\Gamma^{op}$, we should be writing the group cohomology 
	with values in $B$ as $\CH^\bullet(\Gamma^{op};B)$. For notational convenience and since it will not play any role, we will drop the superscript $op$. We will also denote the quotient $\G/\mathcal{Z}^\times$ by $\GZ$. 
\end{notation}

\begin{theorem}\label{classif}

	The group cohomology pointed set $\CH^1\left(\Gamma;\GZ\right)$ classifies the actions of 
	$\Gamma$ on $\AH(M)$ that extend a given action on $M$, in the sense of \ref{extac} and satisfying the condition in remark \ref{tech}, 
	up to conjugation by a fixed element of $\Loc\left(\W\mid\nabla\right)$.  
	
\end{theorem}
\begin{proof} 
	
	\leavevmode
	
	\noindent Note first of all that the action  $\alpha$ also descends to $\GZ$, since it restricts to an automorphism of the center (via the identification 
	$\cZ\simeq C^\infty\left(M\right)\hhbar$ this is simply the action induced by the action on the manifold). 
	Suppose $\tilde{S}\colon\Gamma\rightarrow\GZ$ is a cocycle for the action $\alpha$. Pick any lift $S$ of $\tilde{S}$ to $\G$. Then we have 
	\[S_\gamma \alpha_\gamma\left(S_\mu\right)S_{\mu\gamma}^{-1}\in\mathcal{Z}^\times\hspace{0.3cm}\forall\hspace{0.1cm}\gamma,\mu\in\Gamma.\]
	We see that \[\beta\colon \Gamma^{op}\longrightarrow\Aut\left(\AH\left(M\right)\right),\]
	defined by $\beta_\gamma=\Ad S_\gamma\circ \alpha_\gamma$, is a group homomorphism. Note that $\beta$ is well-defined by point (iii) of \ref{Gstuff} and it is a group homomorphism  since 
	\[\beta_\gamma\circ \beta_\mu=\alpha_\gamma\circ\Ad  \alpha_\gamma^{-1}\left(S_\gamma\right)S_\mu\circ \alpha_\mu=\alpha_\gamma\circ\Ad \alpha_\gamma^{-1}\left(S_{\mu\gamma}\right)
	\circ \alpha_\mu=\Ad S_{\mu\gamma}\circ \alpha_\gamma\circ \alpha_\mu=
	\beta_{\mu\gamma}.\]
	Note that $\beta$ does not depend on the particular lift of $\tilde{S}$. 
	
	\vspace{0.3cm}
	
	Conversely, suppose $\beta_\gamma=\alpha_\gamma\circ\Ad E_\gamma$ where $E\colon\Gamma\rightarrow \G$ and $\beta$ defines an action. Note that $\beta$ only depends on the 
	induced map $\tilde{E}$ into $\GZ$. Since $\beta_{\mu\gamma}=\beta_\gamma\circ \beta_\mu$, we find immediately that 
	\[\Ad \alpha_\gamma\left( E_\gamma\right) \alpha_{\mu\gamma}\left(E_\mu\right)=\Ad \alpha_{\mu\gamma}\left(E_{\mu\gamma}\right),\] which implies that 
	$\gamma\mapsto \alpha_\gamma\left(\tilde{E}_\gamma\right)$ 
	defines a cocycle. Note that the action corresponding to it by the construction above is $\beta$. 
	
	\vspace{0.3cm}
	
	Finally, let us pass to cohomology. So, suppose $\tilde{S}$ and $\tilde{S'}$ are two cohomologous $\Gamma^{op}$-cocycles in $\GZ$. Denote by $\beta$ and $\beta'$ the corresponding actions. 
	Then there is an element 
	$\tilde{C}\in\GZ$ such that 
	\[\tilde{C}\tilde{S}_\gamma \alpha_\gamma\left(\tilde{C}^{-1}\right)=\tilde{S'}_\gamma.\]
	Then, picking any lifts $C,S$ and $S'$ of $\tilde{C}$, $\tilde{S}$ and $\tilde{S'}$ to $\G$, we find the equations 
	\[\Ad CS_\gamma \alpha_\gamma\left(C^{-1}\right)=\Ad S'_\gamma.\] 
	So, 
	\[\beta'_\gamma=\Ad S'_\gamma\circ \alpha_\gamma=\Ad CS_\gamma \alpha_\gamma\left(C^{-1}\right)\circ \alpha_\gamma=\Ad C\circ \beta_\gamma\circ\Ad C^{-1}\]
	for all $\gamma\in\Gamma$. 
	Conversely, if $\beta, \beta'$ are two actions with corresponding cocycles $\tilde{S}$ and $\tilde{S}'$ and lifts $S$, $S'$ and $C\in\G$ satisfies 
	$\Ad C\circ \beta_\gamma\circ \Ad C^{-1}=\beta'_\gamma$ for all $\gamma\in\Gamma$, then we have $\Ad CS_\gamma \alpha_\gamma\left(C^{-1}\right)=\Ad S'_\gamma$ and thus 
	$\tilde{S}$ and $\tilde{S}'$ are 
	cohomologous. 
\end{proof}

\subsection{Computational Tools}\label{5.3.2} 

In this section we will provide some tools that should aid in the concrete computation of the pointed sets $\CH^1\left(\Gamma;\GZ\right)$ of equivalence classes of extended group actions. 
The main burden of proof will be in showing that there exists a surjective map $\mathbb{D}$ from $\G$ to $Z^1(M)\hhbar$ (formal power series of closed one forms). This will provide the 
commuting diagram 
\begin{equation}\label{computingsquare}
\begin{tikzpicture}[baseline=(current  bounding  box.center)]
\draw node at (0,1.5)       {$1$};
\path [draw, ->] (0,1.3)--(0,.3) ;
\draw node at (3,1.5)     {$1$};
\path [draw, ->] (3,1.3)--(3,.3) ;
\draw node at (6,1.5)       {$1$};
\path [draw, ->] (6,1.3)--(6,.5) ;
\draw node at (-2.5,0)       {$1$};
\path [draw, ->] (-2.3,0)--(-.6,0) ;
\draw node at (0,0)       {$\Cht$};
\path [draw, ->] (.5,0)--(2.7,0) ;
\path[draw, ->]   (0,-.3) -- (0,-1.7);
\draw node at (3,0)     {$\cZ^\times$};
\path [draw, ->] (3.4,0)--(4.9,0) ;
\path[draw, ->]   (3,-.3) -- (3,-1.7);
\draw node at (6,0)       {$\bigslant{\mathcal{Z}^\times}{\Cht}$};
\path [draw, ->] (7,0)--(8.3,0) ;
\path[draw, ->]   (6,-.4) -- (6,-1.7);
\draw node at (8.5,0)       {$1$};
\draw node at (-2.5,-2)       {$1$};
\path [draw, ->] (-2.3,-2)--(-.6,-2) ;
\draw node at (0,-2)    {$\AH^\times$};
\path [draw, ->] (.5,-2)--(2.7,-2) ;
\path[draw, ->]   (0,-2.3) -- (0,-3.5);
\draw node at (3,-2)  {$\G$};
\path [draw, ->] (3.4,-2)--(5.1,-2) ;
\path[draw, ->]   (3,-2.3) -- (3,-3.7);
\draw node at (6,-2)    {$Z^1(M)\hhbar$};
\path [draw, ->] (7,-2)--(8.3,-2) ;
\path[draw, ->]   (6,-2.3) -- (6,-3.7);
\draw node at (8.5,-2)       {$0$};
\draw node at (-2.5,-4)       {$1$};
\path[draw, ->]   (-2.3,-4) -- (-1.2,-4);
\draw node at (0,-4)      {$\bigslant{\AH^\times}{\Cht}$};
\path[draw, ->]   (1,-4) -- (2.7,-4);
\path[draw, ->]   (0,-4.4) -- (0,-5.3);
\draw node at (3,-4)    {$\GZ$};
\path[draw, ->]   (3.3,-4) -- (5.3,-4);
\path[draw, ->]   (3,-4.4) -- (3,-5.3);
\draw node at (6,-4)      {$T^1_\hbar(M)$};
\path[draw, ->]   (6.7,-4) -- (8.3,-4);
\path[draw, ->]   (6,-4.4) -- (6,-5.3);
\draw node at (8.5,-4)       {$0$};
\draw node at (0,-5.5)       {$1$};
\draw node at (3,-5.5)     {$1$};
\draw node at (6,-5.5)       {$0$};
\draw node at (8.7,-5.5)       {,};
\end{tikzpicture}
\end{equation}
\noindent which has exact columns and exact rows and where \[T^1_\hbar(M):=\bigslant{\CH^1(M;\C)}{\CH^1(M;\Z)}\oplus\hbar \CH^1(M)\hhbar.\]
This matches well with the appearance of $\hbar T^1_\hbar(M)$ as a parametrization of equivalence classes
of certain formal representations and equivalence classes of certain formal connections in \cite{repline}. Note that, although we will be working with non-Abelian cohomology, we still obtain 
(truncated) exact sequences from short exact sequences of coefficient groups (see section 2.7 of \cite{NAC}). In the following section we will show how one can exploit this diagram to compute 
$\CH^1(\Gamma;\GZ)$.

\begin{notation} 
	From now on we will identify $\Ker\nabla$ with the deformation quantization $\AH$. We will often implicitly identify $\mathcal{Z}\simeq C^\infty\left(M\right)\hhbar$. For 
	elements of 
	graded algebras (or invertibles of graded algebras) a subscript will always refer to the degree in this section.
\end{notation}
\begin{lemma} 
	We have the following short exact sequence of sheaves of groups 
	\begin{equation}\label{sessheaves}\begin{matrix} 1&\rightarrow&{\AH}^\times&\hookrightarrow&\G&\displaystyle\stackrel{\D}{\longrightarrow}&Z^1\hhbar&\rightarrow&0\\
	&           &          &               &          g       &\longmapsto    & g^{-1}\nabla g        &           &  .
	\end{matrix}\end{equation}
	
\end{lemma}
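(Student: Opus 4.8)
The plan is to verify directly that $\D\colon\G\to Z^1\hhbar$, $g\mapsto g^{-1}\nabla g$, is a well-defined surjective homomorphism of sheaves of groups whose kernel is exactly $\AH^\times$; exactness at the three spots of \ref{sessheaves} then follows formally. First I would check well-definedness, i.e. that $\D(g)$ is a closed and central one-form for every $g\in\G$. Centrality is built into $\G$: differentiating $gg^{-1}=1$ gives $\nabla(g^{-1})g=-g^{-1}\nabla g$, so $\D(g)=-\nabla(g^{-1})g\in\Omega^1(M,\mathcal{Z})$ by the defining condition of $\G$. For closedness I set $\beta=g^{-1}\nabla g$, so $\nabla g=g\beta$; applying $\nabla$ and using flatness $\nabla^2=0$ together with the Leibniz rule yields $0=\nabla(g)\wedge\beta+g\,\nabla\beta=g(\beta\wedge\beta)+g\,\nabla\beta$, and since $\beta$ is central $\beta\wedge\beta=0$, whence $\nabla\beta=0$. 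As $\nabla$ restricts to $d$ on $\mathcal{Z}\simeq C^\infty(M)\hhbar$ (this is the same computation already appearing in \ref{Gstuff}(v)), we conclude $d\beta=0$, i.e. $\D(g)\in Z^1\hhbar$.

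Next I would establish the homomorphism property and compute the kernel. The Leibniz rule gives $\D(gh)=h^{-1}(g^{-1}\nabla g)h+h^{-1}\nabla h$, and since $\D(g)$ is central the first term equals $\D(g)$, so $\D(gh)=\D(g)+\D(h)$ into the additive group $Z^1\hhbar$. Hence $\Ker\D=\{g\in\G:\nabla g=0\}=(\Ker\nabla)^\times$, which under the standing identification $\Ker\nabla\simeq\AH$ is $\AH^\times$; that this is a normal subgroup of $\G$ was already recorded in \ref{Gstuff}(ii). This gives exactness at $\AH^\times$ (the inclusion is injective) and at $\G$ (the image of the inclusion equals $\Ker\D$).

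It remains to prove exactness at $Z^1\hhbar$, i.e. surjectivity of $\D$ as a morphism of sheaves, for which local surjectivity suffices. Over any open $V$ with $\CH^1(V)=0$ a closed formal one-form $\beta=\sum_k\hbar^k\beta_k$ is exact, $\beta=d\alpha$ with $\alpha\in C^\infty(V)\hhbar\simeq\mathcal{Z}(V)$; since $\alpha$ is central, $e^\alpha\in\mathcal{Z}^\times(V)\subset\G(V)$ and $\D(e^\alpha)=e^{-\alpha}\nabla e^\alpha=d\alpha=\beta$. Such neighborhoods cover $M$, so $\D$ is an epimorphism of sheaves and \ref{sessheaves} is exact.

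The genuinely hard point, which I expect to be the main obstacle and which is not needed for the sheaf statement itself but for the exactness of the bottom rows of \ref{computingsquare} on \emph{global} sections, is the existence of a global preimage in $\G(M)$ for a prescribed global closed one-form. The exponential-of-a-primitive argument above is purely local, and $e^\alpha$ need not glue to a global section; I would instead build the preimage order by order in the $\hbar$-adic filtration, solving at each step an equation of the form $[A_{-1},s]=\cdots$ by means of the acyclicity of the operator $[A_{-1},-]$ that underlies the whole Fedosov construction, and I would isolate this stronger global surjectivity as the separate result \ref{brunt}.
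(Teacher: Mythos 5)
Your proof is correct and follows essentially the same route as the paper: well-definedness via $\nabla(g^{-1}\nabla g)=-(g^{-1}\nabla g)\wedge(g^{-1}\nabla g)=0$, the kernel identified with $(\Ker\nabla)^\times\simeq\AH^\times$, and local surjectivity by exponentiating a local primitive of the closed form, which is exactly the content of the decomposition \ref{decompG} that the paper invokes. Your closing remark correctly isolates global surjectivity as the separate, harder statement \ref{brunt} (which the paper proves by \v{C}ech-cohomological means rather than the $[A_{-1},-]$-acyclicity you sketch, but that is outside the present lemma).
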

\begin{proof}
	
	\leavevmode
	
	\noindent Note first that $\D$ is well-defined since $d\left(g^{-1}\nabla g\right)=\nabla\left(g^{-1}\nabla g\right)=-\left(g^{-1}\nabla g\right)^2=0$. The proof follows from the decomposition 
	\eqref{decompG}
	and the fact that de Rham cohomology vanishes locally.
\end{proof}
We will show that the above sequence induces an exact sequence of groups on global sections. Note that this means we should prove surjectivity of $\D$. To do this we 
will use \v{C}ech 
cohomology 
and some facts about rings of formal power series. So let us fix a good cover $\mathcal{U}=\{U_i\}_{i\in J}$ and recall that for smooth manifolds this choice will not play a significant role. Recall 
also that $\AH\left(M\right)$ and $C^\infty\left(M\right)\hhbar$ are complete with respect to the $\hbar$-adic topology. 

\begin{definition}\label{hadic}
	The $\hbar$-adic topology is given by the norm $\norm{f}=2^{-k}$, where $k$ is the smallest 
	non-negative integer such that $f_k\neq0$, it satisfies the (in)equalities $\norm{f+g}\leq\max\{\norm{f},\norm{g}\}$ and $\norm{fg}=\norm{f}\norm{g}$.
\end{definition}
We have the following fact (see chapter 3 of \cite{AC}). 
\begin{lemma}\label{expLog}
	
	Suppose $R$ is a commutative unital ring that contains a copy of the rationals, then $\left(\hbar R\hhbar,+\right)\simeq\left(1+\hbar R\hhbar,\cdot\right)$ by the maps 
	\[ \mbox{exp}\left(f\right)=\displaystyle\sum_{n=0}^\infty \frac{f^n}{n!} \hspace{0.3cm}\mbox{and}\hspace{0.3cm} 
	\mbox{Log}\left(1+f\right)=\displaystyle\sum_{n=1}^\infty \frac{\left(-f\right)^n}{n}.\]
\end{lemma}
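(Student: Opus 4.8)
The plan is to show that $\exp$ and $\mathrm{Log}$ are well-defined, mutually inverse group homomorphisms, with essentially all of the content concentrated in the inverse property. First I would dispatch convergence and the target sets. For $f\in\hbar R\hhbar$ we have $f^n\in\hbar^n R\hhbar$, so $\norm{f^n/n!}\to 0$ and the partial sums of $\exp(f)$ form a Cauchy sequence; since $R\hhbar$ is $\hbar$-adically complete the sum exists, and the divisions by $n!$ (respectively by $n$) are legitimate precisely because $\Q\subset R$. Reading off lowest-order terms then shows $\exp(f)\in 1+\hbar R\hhbar$ and, for $f\in\hbar R\hhbar$, that $\mathrm{Log}(1+f)\in\hbar R\hhbar$, so both maps have the claimed source and target.

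Next I would establish that $\exp$ carries addition to multiplication. Because $R$ is commutative, any two elements $f,g\in\hbar R\hhbar$ commute, so forming the Cauchy product of the two $\hbar$-adically convergent series and reindexing gives
\[
\exp(f)\exp(g)=\sum_{k\geq 0}\frac{1}{k!}\sum_{m+n=k}\binom{k}{m}f^m g^n=\sum_{k\geq 0}\frac{(f+g)^k}{k!}=\exp(f+g),
\]
where the middle equality is the binomial theorem, valid exactly because $f$ and $g$ commute. Rearrangement of the doubly-indexed sum is justified by $\hbar$-adic convergence.

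The main obstacle is proving that $\mathrm{Log}$ and $\exp$ are mutually inverse; once this is done, the bijectivity and the homomorphism property of $\mathrm{Log}$ follow formally. I would reduce to the universal one-variable case. In $\Q\dl X\dr$ set $E(X)=\exp(X)$ and $L(Y)=\mathrm{Log}(1+Y)$; since $E(X)-1$ has vanishing constant term, the composite $L(E(X)-1)$ is a well-defined formal power series, and $E(X)$ is invertible. Using the formal derivative and the chain rule for formal power series, together with $E'=E$ and $L'(Y)=(1+Y)^{-1}$, one computes $\frac{d}{dX}L(E(X)-1)=E(X)^{-1}E'(X)=1$; as this composite has zero constant term and a $\Q$-algebra admits unique formal integration, it equals $X$, and the identity $E(L(Y))=1+Y$ follows symmetrically. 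Finally I would transfer these universal identities: for fixed $f\in\hbar R\hhbar$ the substitution $X\mapsto f$ extends to a $\Q$-algebra homomorphism $\Q\dl X\dr\to R\hhbar$ that is continuous for the $X$-adic and $\hbar$-adic topologies (well-defined because $f$ is topologically nilpotent), and continuity lets it commute with the infinite sums defining $\exp$ and $\mathrm{Log}$. Applying it to $\mathrm{Log}(\exp(X))=X$ and $\exp(\mathrm{Log}(1+X))=1+X$ then yields $\mathrm{Log}(\exp(f))=f$ and $\exp(\mathrm{Log}(1+f))=1+f$. I expect the only points requiring genuine care to be the chain rule for formal power series and the continuity of substitution; the remainder is bookkeeping with $\hbar$-adic estimates.
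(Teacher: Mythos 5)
Your argument is correct in substance, but note that the paper does not actually prove this lemma at all---it is quoted as a known fact with a pointer to chapter 3 of \cite{AC}---so there is no internal proof to compare against; what you have written is a complete, self-contained replacement for that citation. Your route (establish the universal identities in $\Q\dl X\dr$ by formal differentiation, then transfer them along the continuous substitution homomorphism $X\mapsto f$) is the standard one, and the individual steps hold: the chain rule applies to composition with a series of zero constant term, $\hbar$-adic completeness justifies the rearrangements, and since $E(X)-1=X+O(X^2)$ has invertible linear term, a one-sided compositional inverse is automatically two-sided---which is the cleanest way to make your ``follows symmetrically'' precise, because differentiating $E(L(Y))$ directly yields the equation $(1+Y)h'(Y)=h(Y)$ rather than $h'=1$ and needs one extra line.

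One discrepancy you should flag explicitly: as printed, the lemma's formula $\mathrm{Log}(1+f)=\sum_{n\geq1}(-f)^n/n$ equals $-f+f^2/2-f^3/3+\cdots=-\log(1+f)$, whose formal derivative is $-(1+Y)^{-1}$, not the $(1+Y)^{-1}$ your computation uses. So either the series should read $\sum_{n\geq1}(-1)^{n+1}f^n/n$ (the correction your proof silently assumes), or else the two displayed maps are each group isomorphisms but are \emph{not} mutually inverse, differing by the automorphism $f\mapsto-f$ of $\left(\hbar R\hhbar,+\right)$. Since the paper only ever invokes the lemma to assert that $\exp$ and $\mathrm{Log}$ are isomorphisms (as in \ref{f01exact} and \ref{P1P2P3}), either reading suffices for the applications, but what you have proved is the corrected statement.
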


\begin{lemma}\label{f01exact}
	
	Suppose $f\in\left(C^\infty\left(M\right)\hhbar\right)^\times\simeq\mathcal{Z}^\times$ and $f_0=1$, then $\frac{df}{f}$ is exact.
\end{lemma}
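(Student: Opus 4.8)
The plan is to produce a global primitive for $\frac{df}{f}$ by taking a formal logarithm. Since $f_0 = 1$, we have $f \in 1 + \hbar C^\infty(M)\hhbar$, so $f - 1 \in \hbar C^\infty(M)\hhbar$. Applying Lemma \ref{expLog} with the commutative unital ring $R = C^\infty(M)$ (which contains $\Q$, being a $\C$-algebra), I would set $g := \mathrm{Log}(f) \in \hbar C^\infty(M)\hhbar$; this is a globally defined element of $C^\infty(M)\hhbar \simeq \mathcal{Z}$, and by Lemma \ref{expLog} it satisfies $f = \exp(g)$.

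It then remains to check that $\frac{df}{f} = dg$, so that $\frac{df}{f}$ is the exterior derivative of the global function $g \in C^\infty(M)\hhbar$ and hence exact by definition. Since $\mathcal{Z} \simeq C^\infty(M)\hhbar$ is commutative and the de Rham differential $d$ is a derivation that is continuous in the $\hbar$-adic topology, I can differentiate the convergent series $\exp(g) = \sum_{n \geq 0} g^n/n!$ termwise: each summand contributes $d(g^n)/n! = g^{n-1}\,dg/(n-1)!$, where commutativity of $C^\infty(M)\hhbar$ is used to collect the single factor $dg$. Summing yields $df = \exp(g)\,dg = f\,dg$, whence $\frac{df}{f} = f^{-1}df = dg$.

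There is no serious obstacle here; the only two points requiring care are that the logarithm exists \emph{globally} — which is exactly what the hypothesis $f_0 = 1$ guarantees, since the series $\mathrm{Log}$ of Lemma \ref{expLog} is only defined on $1 + \hbar C^\infty(M)\hhbar$ — and that the termwise differentiation above is legitimate, which follows from $\hbar$-adic convergence, as $g \in \hbar C^\infty(M)\hhbar$ forces $g^n$ to start in $\hbar$-degree at least $n$, so the partial sums converge and $d$ commutes with the limit.
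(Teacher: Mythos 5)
Your proof is correct and follows essentially the same route as the paper: both take $g=\mathrm{Log}(f)$ via Lemma \ref{expLog} and verify $d\exp(g)=\exp(g)\,dg$ by the product rule, concluding $\frac{df}{f}=dg$. You merely spell out the $\hbar$-adic convergence justification for termwise differentiation, which the paper leaves implicit.
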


\begin{proof}
	
	\leavevmode
	
	\noindent It is easily verified that $d$ satisfies the product rule on formal power series. This shows that we have $d\exp{g}=\left(\exp{g}\right)dg$ for all $g\in\hbar C^\infty\left(M\right)\hhbar$. 
 Now let $g=\mbox{Log}\left(f\right)$ and we see that $\frac{df}{f}=dg$. 
\end{proof}

\begin{proposition}\label{thing1} 
	
	Let us denote the subgroup of exact forms in $Z^1\left(M\right)\hhbar$ by $d\mathcal{Z}$ and the restriction of $\mathbb{D}$ to $\mathcal{Z}^\times$ by $D$. Then we have 
	$d\mathcal{Z}\subset\Ima D$ and $\Ima D\diagup d\mathcal{Z}\simeq\CH^1\left(M;\Z\right)$. 
	
\end{proposition}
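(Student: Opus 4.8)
The plan is to reduce the statement to the classical theory of logarithmic derivatives of nowhere-vanishing functions, concentrated in leading ($\hbar^0$) order. First I would record that for a central invertible $z\in\mathcal{Z}^\times$ equation \ref{Fedform} gives $\nabla z=dz$, so that $D(z)=z^{-1}\nabla z=\frac{dz}{z}$ is literally the logarithmic derivative. The inclusion $d\mathcal{Z}\subset\Ima D$ is then immediate: for $\alpha\in C^\infty(M)\hhbar$ set $z=e^{\alpha_0}\exp(\alpha-\alpha_0)$, where $e^{\alpha_0}$ is the ordinary nowhere-vanishing exponential of the leading term and $\exp(\alpha-\alpha_0)$ is given by \ref{expLog}; then $z\in\mathcal{Z}^\times$, and combining the identity $d(e^{\alpha_0})=e^{\alpha_0}d\alpha_0$ with the product rule from the proof of \ref{f01exact} yields $d\exp(\alpha)=\exp(\alpha)\,d\alpha$, hence $D(z)=d\alpha$. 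Next I would compute the image modulo exact forms: writing $z=z_0u$ with $u=z/z_0\in 1+\hbar C^\infty(M)\hhbar$, one has $\frac{dz}{z}=\frac{dz_0}{z_0}+\frac{du}{u}$, and $\frac{du}{u}$ is exact by \ref{f01exact}. Therefore $[\frac{dz}{z}]=[\frac{dz_0}{z_0}]\in\CH^1(M,\C)$, so that $\Ima D/d\mathcal{Z}$ is exactly the set of classes $\{[\frac{df_0}{f_0}]:f_0\in C^\infty(M,\C^\times)\}$, which sits entirely in the $\hbar^0$ summand and is the image of the logarithmic-derivative homomorphism $f_0\mapsto[\frac{df_0}{f_0}]$.

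It then remains to identify this image with $\CH^1(M,\Z)$ by two inclusions. For integrality, for any loop $\gamma$ the period is $\oint_\gamma\frac{df_0}{f_0}=\oint_{f_0\circ\gamma}\frac{dz}{z}=2\pi i\,w(f_0\circ\gamma)$, where $w$ denotes the winding number of $f_0\circ\gamma$ about the origin of $\C^\times$; hence $\frac{1}{2\pi i}[\frac{df_0}{f_0}]$ has integral periods and lies in $\CH^1(M,\Z)$. For surjectivity, given a closed $1$-form $\beta$ with periods in $2\pi i\Z$ I would pass to the fixed good cover $\mathcal{U}$, choose local primitives $\beta|_{U_i}=df_i$, and set $g_i=e^{f_i}$, so that $\frac{dg_i}{g_i}=\beta|_{U_i}$. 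On overlaps $g_i/g_j=e^{a_{ij}}$ with $a_{ij}=f_i-f_j$ locally constant, and $\{a_{ij}\}$ is the \v{C}ech representative of $[\beta]$ under the de Rham--\v{C}ech comparison. The period hypothesis says exactly that $[a_{ij}]\in 2\pi i\,\CH^1(M,\Z)$, i.e. $a_{ij}=2\pi i\,m_{ij}+(b_i-b_j)$ with $m_{ij}\in\Z$ and $b_i\in\C$; then $g_i/g_j=e^{b_i}/e^{b_j}$, so the rescaled sections $\tilde g_i:=e^{-b_i}g_i$ glue to a global nowhere-vanishing $f_0$ with $\frac{df_0}{f_0}=\beta$. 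Combining the two inclusions gives $\Ima D/d\mathcal{Z}=2\pi i\,\CH^1(M,\Z)\cong\CH^1(M,\Z)$.

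The step I expect to be the main obstacle is the surjectivity direction, specifically translating the analytic condition that the periods of $\beta$ lie in $2\pi i\Z$ into the \v{C}ech statement that the transition cocycle $\{g_i/g_j\}$ is a coboundary in the constant sheaf $\C^\times$. This is where the de Rham--\v{C}ech isomorphism on the fixed good cover must be invoked with care; equivalently one may phrase it as surjectivity of the connecting homomorphism of the smooth exponential sheaf sequence $0\to\Z\to C^\infty(\cdot,\C)\to C^\infty(\cdot,\C^\times)\to 0$, which holds because $C^\infty(\cdot,\C)$ is a fine sheaf. Everything else is formal manipulation with the product rule together with \ref{expLog} and \ref{f01exact}.
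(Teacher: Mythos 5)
Your proof is correct and takes essentially the same route as the paper: both reduce to the classical-order logarithmic derivative via the splitting $\mathcal{Z}^\times\simeq C^\infty\left(M\right)^\times\times\left(1+\hbar C^\infty\left(M\right)\hhbar\right)$ together with \ref{f01exact}, and both identify the image of $f_0\mapsto\left[\frac{df_0}{f_0}\right]$ with $2\pi i\,\CH^1\left(M,\Z\right)$ by means of the constant and smooth exponential sequences \ref{expconstant} and \ref{expsmooth}. The only cosmetic difference is that you establish integrality by a period/winding-number computation, whereas the paper reads it off directly from the \v{C}ech cocycle $g\left(i\right)-g\left(j\right)\in 2\pi i\Z$ formed from local logarithms of $f_0$.
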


\begin{proof}
	
	\leavevmode
	
	\noindent Suppose $dg\in d\mathcal{Z}$ such that $\left(dg\right)_0=0$. Then we may assume that $g\in\hbar C^\infty\left(M\right)\hhbar$. We want to show that there exists an $f\in\mathcal{Z}^\times$ 
	such that $Df=dg$. Clearly $f=\exp\left(g\right)$ will do the
	job. This shows that every exact form $dg$ with 
	$\left(dg\right)_0=0$ is in $\Ima D$. Now suppose $dg$ is a general element of $d\mathcal{Z}$, then $\exists f\in\mathcal{Z}^\times$ such that  
	$Df=\hbar \left(dg\right)_1+\hbar^2 \left(dg\right)_2+\ldots$ , but then $dg=De^{g_0}+Df=D\left(e^{g_0}f\right)$. This shows 
	the first claim of the proposition.
	
	\vspace{0.3cm}
	
	To show the second claim consider the map \[C_\mathcal{U}\colon\Ima D\longrightarrow \chec^1\left(\mathcal{U}; \C\right)\hhbar\] where 
	$C_\mathcal{U}\left(Df\right)$ is represented by $\eta\left(i,j\right)=g\left(i\right)-g\left(j\right)$ if $Df|_{U_i}=dg\left(i\right)$. We leave the routine check that this map 
	is a well-defined group homomorphism to the reader. 
	Note that we have $\Ker C_\mathcal{U}=d\mathcal{Z}$ and in fact the map $C_\mathcal{U}$ is simply given by the usual map which implements the isomorphism of de Rham en \v{C}ech cohomology.  
	It is left to 
	show that $\Ima C_\mathcal{U}\simeq \CH^1\left(M;\Z\right)$. Note that every element in $\mathcal{Z}^\times$ can be written as a product of a nowhere vanishing function and  a 
	function as in lemma \ref{f01exact}.
	So we find that $C_\mathcal{U}\left(Df\right)=C_\mathcal{U}\left(Df_0\right)=[\eta]$, where we may set $\eta\left(i,j\right)=g\left(i\right)-g\left(j\right)$ with 
	$e^{g\left(i\right)}=f_0|_{U_i}$. So we see that 
	$[\eta]\in\chec^1\left(\mathcal{U};\Z\right)\hookrightarrow\chec^1\left(\mathcal{U};\C\right)\hhbar$. Where the inclusion comes from the exact sequence of sheaves 
	\begin{equation}\label{expconstant}0\rightarrow\Z\stackrel{2\pi i\cdot}{\longrightarrow}\C\stackrel{e^{\cdot}}{\longrightarrow}\C^\times\rightarrow 1,\end{equation}
	since it is also a short exact sequence of groups. So we find the inclusion $\Ima C_\mathcal{U}\subset \chec^1\left(\mathcal{U},\Z\right)$. On the other hand consider the exact sequence of sheaves 
	\begin{equation}\label{expsmooth}0\rightarrow\Z\stackrel{2\pi i\cdot}{\longrightarrow}C^\infty\left(M\right)\stackrel{e^{\cdot}}{\longrightarrow}C^\infty\left(M\right)^\times\rightarrow 1.\end{equation}
	Now we see that the first connecting map $\partial$ in the corresponding long exact sequence in \v{C}ech cohomology is surjective, since 
	$\chec^1\left(\mathcal{U};C^\infty\left(M\right)\right)=0$, and $\partial\left(f_0\right)=[\lambda]$ where 
	$\lambda$ is given by $\lambda\left(i,j\right)=g\left(i\right)-g\left(j\right)$ such that $e^{g\left(i\right)}=f_0|_{U_i}$, but then 
	$Df_0|_{U_i}=dg\left(i\right)$ so $\chec^1\left(\mathcal{U};\Z\right)\subset\Ima C_\mathcal{U}$. To get the result, simply note 
	that $\chec^1\left(\mathcal{U};\Z\right)\simeq \CH^1\left(M;\Z\right)$ since $\mathcal{U}$ is a good cover.
\end{proof}
\begin{remark}
	Note that the arguments in the proof of proposition \ref{thing1} above are simply the standard considerations when one notices the fact that $D$ agrees locally with the differential of the logarithm and one notices the fact that 
	$C_{\mathcal{U}}\circ D$ factors through the 
	non-vanishing functions (by noting that we have the isomorphism $\mathcal{Z}^\times\simeq C^\infty\left(M\right)^\times\times\left(1+\hbar C^\infty\left(M\right)\hhbar\right)$).
\end{remark}

\begin{lemma}\label{P1P2P3}
	There are maps 
	\[P_1\colon\chec^1\left(\mathcal{U};\mathcal{Z}^\times\right)\stackrel{\sim}{\longrightarrow}\chec^1\left(\mathcal{U};C^\infty\left(M\right)^\times\right),\]
	\[P_2\colon\chec^1\left(\mathcal{U};{\AH}^\times\right)\longrightarrow\chec^1\left(\mathcal{U};C^\infty\left(M\right)^\times\right)\] 
	and 
	\[P_3\colon\chec^1\left(\mathcal{U};1+\hbar\C\hhbar\right)\stackrel{\sim}{\longrightarrow}\hbar\chec^1\left(\mathcal{U};\C\right)\hhbar,\]
	where $P_2$ has trivial kernel (note that as its domain is not necessarily a group this does not imply that the map is injective) and $P_1$ and $P_2$ are induced by the map 
	$f_0+\hbar f_1+\ldots\mapsto f_0$. 
\end{lemma}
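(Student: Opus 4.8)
The plan is to build all three maps from the reduction-mod-$\hbar$ homomorphism $f_0+\hbar f_1+\cdots\mapsto f_0$ and its logarithmic constant-coefficient counterpart, and then to analyze each on $\chec^1(\mathcal{U},-)$ via the $\hbar$-adic filtration. First I record the short exact sequences of sheaves of groups that govern the construction. Reduction modulo $\hbar$ gives the exact sequence of \emph{abelian} sheaves
\[
1\to 1+\hbar C^\infty(M)\hhbar\to \mathcal{Z}^\times\to C^\infty(M)^\times\to 1
\]
(abelian because $\mathcal{Z}\simeq C^\infty(M)\hhbar$ is commutative) and the exact sequence of sheaves of groups
\[
1\to 1+\hbar\AH\to \AH^\times\to C^\infty(M)^\times\to 1,
\]
where $1+\hbar\AH$ denotes the (generally non-abelian) sheaf of star-invertibles that are $1$ modulo $\hbar$; it is normal in $\AH^\times$ as the kernel of a homomorphism. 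Then $P_1$ and $P_2$ are the maps induced on $\chec^1(\mathcal{U},-)$, while $P_3$ is induced by the sheaf isomorphism $\mathrm{Log}\colon(1+\hbar\C\hhbar,\cdot)\iso(\hbar\C\hhbar,+)$ of Lemma \ref{expLog} (with $R=\C$), followed by the coefficientwise identification $\chec^1(\mathcal{U},\hbar\C\hhbar)=\hbar\,\chec^1(\mathcal{U},\C)\hhbar$.

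The two abelian statements are then immediate. Via $\mathrm{Log}$, the kernel sheaf $1+\hbar C^\infty(M)\hhbar$ is isomorphic to the additive sheaf $\hbar C^\infty(M)\hhbar$ of sections of a sheaf of $C^\infty$-modules; admitting partitions of unity it is fine, hence acyclic, so that $\chec^1(\mathcal{U},1+\hbar C^\infty(M)\hhbar)=\chec^2(\mathcal{U},1+\hbar C^\infty(M)\hhbar)=0$. The long exact sequence attached to the first displayed sequence then forces $P_1$ to be an isomorphism of groups. For $P_3$, the map $\mathrm{Log}$ is a sheaf isomorphism and hence induces an isomorphism on $\chec^1$, and the coefficientwise identification is an isomorphism by $\hbar$-adic completeness; composing yields the isomorphism $P_3$.

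The substance is $P_2$, where we are in non-abelian cohomology, so that both sides are only pointed sets. Here I would invoke the truncated exact sequence of pointed sets attached to the second displayed sequence (section 2.7 of \cite{NAC}):
\[
\chec^1(\mathcal{U},1+\hbar\AH)\to \chec^1(\mathcal{U},\AH^\times)\xrightarrow{P_2}\chec^1(\mathcal{U},C^\infty(M)^\times),
\]
whose exactness at the middle term identifies the kernel of $P_2$ (the preimage of the distinguished point) with the image of the left-hand map. It therefore suffices to show that $\chec^1(\mathcal{U},1+\hbar\AH)$ is the trivial pointed set. This I would prove by $\hbar$-adic successive approximation: given a cocycle $\{g_{ij}\}$ with $g_{ij}\equiv 1\bmod\hbar$, its order-$\hbar$ part is an ordinary additive Čech $1$-cocycle valued in the fine sheaf $C^\infty(M)$, hence a coboundary, and multiplying $\{g_{ij}\}$ by the $0$-cochain with entries $1+\hbar h_i^{(1)}$ removes the order-$\hbar$ term without disturbing the already-trivial lower orders. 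At each stage the leading obstruction again lands in the acyclic sheaf $C^\infty(M)$, so one trivializes degree by degree, and the infinite product of the resulting $0$-cochains converges $\hbar$-adically by completeness to a trivialization of $\{g_{ij}\}$.

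The main obstacle is exactly this last triviality. One must verify that at every step the leading obstruction is genuinely a cocycle for the \emph{abelian} sheaf $C^\infty(M)$ (so that fineness applies) and that killing the order-$\hbar^k$ term does not reintroduce lower-order contributions. Both points rest on the observation that once the cocycle is congruent to $1$ modulo $\hbar^k$, all star-product commutators among its entries are of order $\hbar^{k+1}$ or higher, so the cocycle identity reduces to leading order to the additive one; the remaining bookkeeping and the $\hbar$-adic convergence of the trivializing product are then routine.
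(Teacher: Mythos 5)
Your proposal is correct and follows essentially the same route as the paper: the paper also reduces everything to the $\hbar$-adic filtration $(\AH^\times)_n=1+\hbar^n\AH$, kills $\chec^1(\mathcal{U},1+\hbar\AH)$ by exactly the degree-by-degree trivialization you describe (using $\chec^1(\mathcal{U},C^\infty(M))=0$ and $\hbar$-adic convergence of the product of correcting $0$-cochains), and obtains $P_3$ from the $\exp/\mathrm{Log}$ isomorphism of Lemma \ref{expLog}. The only cosmetic difference is that for surjectivity of $P_1$ the paper invokes the splitting $\mathcal{Z}^\times\simeq C^\infty(M)^\times\times\left(1+\hbar C^\infty(M)\hhbar\right)$ rather than the vanishing of $\chec^2$ of the kernel sheaf; both are immediate.
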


\begin{proof}
	
	\leavevmode

	\noindent  The proof for $P_1$ is analogous to the proof for $P_2$ if not simply easier. So we will explicitly show the proof only for $P_2$. Consider the decreasing filtration 
	given by $({\AH}^\times)_n=1+\hbar^n\AH$ for $n>0$ and $({\AH}^\times)_0={\AH}^\times$. Then
	$({\AH}^\times)_{n+1}\lhd ({\AH}^\times)_n$ and $({\AH}^\times)_n\star({\AH}^\times)_m\subset({\AH}^\times)_{\min(n,m)}$. 
	So we have the 
	short exact sequence of (sheaves of) groups 
	\begin{equation}\label{filtseq1}1\rightarrow\left({\AH}^\times\right)_1\longrightarrow{\AH}^\times\longrightarrow C^\infty\left(M\right)^\times\rightarrow 1.\end{equation}
	Note that, if $f,g\in({\AH}^\times)_n$, then $f\star g=1+\hbar^n(f_n+g_n)\mod ({\AH}^\times)_{n+1}$ so we also have the short exact sequences of
	(sheaves of) groups
	\begin{equation}\label{filtseq2}1\rightarrow\left({\AH}^\times\right)_{n+1}\longrightarrow\left({\AH}^\times\right)_n\longrightarrow C^\infty\left(M\right)\rightarrow 0,\end{equation}
	for all $n\in\N$. The map $P_2$ is induced in the long exact sequence corresponding to \ref{filtseq1}. In order to show that it has trivial kernel, we should show that 
	$\chec^1(\mathcal{U};({\AH}^\times)_1)$ vanishes. Note 
	first of all 
	that, since $\chec^1(\mathcal{U};C^\infty(M))=0$, we find surjections 
	$\chec^1(\mathcal{U};({\AH}^\times)_{n+1})\twoheadrightarrow\chec^1(\mathcal{U};({\AH}^\times)_n)$ (which in fact have trivial kernel). 
	Suppose $S\colon J^2\rightarrow({\AH}^\times)_1$ is a cocycle. 
	Then, by the surjection above, $\exists a_1\colon J\rightarrow({\AH}^\times)_1$ 
	such that the cochain $a_1\cdot S\colon J^2\rightarrow({\AH}^\times)_2$ given by \[a_1\cdot S\left(i,j\right)=a_1\left(i\right)\star S\left(i,j\right)\star a_1\left(j\right)^{-1}\] is a 
	cocycle. Iterating this process yields a sequence $a_k\colon J\rightarrow({\AH}^\times)_k$ such that, 
	if we denote \[b_k:=\displaystyle\bigstarm_{j=1}^ka_j:=a_k\star a_{k-1}\star\ldots\star a_1,\] then $b_k\cdot S:J^2\rightarrow ({\AH}^\times)_{k+1}$ is a cocycle (cohomologous to $S$). Let us 
	denote $S_k:=b_k\cdot S$. Then we might consider its values 
	in $\AH$. Suppose $k\geq l\in\N$, we have
	\[\norm{S_k\left(i,j\right)-S_l\left(i,j\right)}=\norm{\left(b_k\left(i\right)-b_l\left(i\right)\right)\star S\left(i,j\right)\star b_k\left(j\right)^{-1}+b_l\left(i\right)\star 
		S(i,j) \star(b_k(j)^{-1}- b_l(j)^{-1})}\] \[\leq \max\left\{\norm{b_k(i)-b_l(i)}, 
	\norm{b_k(j)^{-1}-b_l(j)^{-1}}\right\}=\] \[=
	\max\left\{\norm{\left(-1+\bigstarm_{\alpha=l+1}^k a_\alpha\left(i\right)\right)\star b_l\left(i\right)}, 
	\norm{b_l\left(j\right)^{-1}\left(-1+\left(\bigstarm_{\alpha=l+1}^k a_\alpha\left(j\right)\right)^{-1}\right)}\right\}\leq 2^{-1-l}.\]
	This means the sequence $S_k\left(i,j\right)$ is Cauchy in $\AH$ (which is complete) and therefore has a limit $S_\infty\left(i,j\right)\in\AH$ for all 
	$\left(i,j\right)\in J^2$. Now note that since all the $S_k$ have values in $({\AH}^\times)_1$ so does $S_\infty$.
	Similarly we can show that the sequence $b_k$ has a limit $b\colon J\rightarrow ({\AH}^\times)_1$. Moreover, by the same computation, we see that $S_\infty=b\cdot S$. In particular, 
	$S$ is 
	cohomologous to $S_\infty$ and, since $\varprojlim \left({\AH}^\times\right)_n=1$, 
	we see that $[S]=[1]\in\chec^1(\mathcal{U};({\AH}^\times)_1)$. Since we started with an arbitrary cocycle this shows that the last cohomology pointed 
	set is in fact trivial. This shows the claim about $P_2$ and an easier 
	version of the same argument shows that $P_1$ is injective (since this is a group homomorphism). Since 
	$C^\infty\left(M\right)\hhbar^\times\simeq C^\infty\left(M\right)^\times\times\left(1+\hbar C^\infty\left(M\right)\hhbar\right)$, in the obvious way, we see that 
	$P_1$ is also surjective. 
	
	\vspace{0.3cm}
	
	The  map $P_3$ is simply induced by $\exp$ from lemma \ref{expLog}. 
	
\end{proof}
\begin{remark} To show lemma \ref{P1P2P3} we have used a method to pass from cohomology of $\AH^\times$ to cohomology of $C^\infty(M)^\times$ by showing that 
	$\chec^1\left(\mathcal{U};\left(\AH^\times\right)_1\right)=\{1\}$, using the fact that, by existence of a smooth partition of unity,  $\chec^1\left(\mathcal{U};C^\infty(M)\right)=0$. This method works equally well for group cohomology when 
	$\CH^1\left(\Gamma;C^\infty(M))\right)=0$ and we will use it in the next section. When we do this we will simply refer to the proof of lemma \ref{P1P2P3}, instead of basically repeating the proof. Note that, in general, 
	the method provides an indication of how the cohomology of $\AH^\times$ and $C^\infty(M)\hhbar^\times$ are both given by infinitely many copies of the cohomology of $C^\infty(M)$ and the 
	cohomology of $C^\infty(M)^\times$. 
\end{remark}

\begin{proposition}\label{brunt} 
	$\D$ is surjective on total sections. 
\end{proposition}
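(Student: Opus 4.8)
The plan is to read off the surjectivity of $\D$ on global sections as the vanishing of the obstruction to lifting a global closed one-form through the short exact sequence of sheaves of groups \ref{sessheaves}, and then to collapse that obstruction to lowest order in $\hbar$ using the map $P_2$ of \ref{P1P2P3}, where it becomes visibly trivial thanks to the fineness of $C^\infty(M)$. First I would record that $\D$ is an honest group homomorphism onto the \emph{additive} group $Z^1\hhbar$: for $U,V\in\G$ the computation in the proof of \ref{Gstuff}(i) gives $\D(UV)=\Ad V^{-1}(\D U)+\D V$, and since $\Ima\D\subset\Omega^1(M,\mathcal{Z})$ is central the conjugation drops out, leaving $\D(UV)=\D U+\D V$.

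Fix a closed form $\beta\in Z^1(M)\hhbar$ and a good cover $\mathcal{U}=\{U_i\}_{i\in J}$. On each contractible $U_i$ every $\hbar$-order of $\beta$ is exact, so $\beta|_{U_i}=d\alpha_i$ for some $\alpha_i\in C^\infty(U_i)\hhbar$, and the central section $g_i:=e^{\alpha_i}\in\mathcal{Z}^\times(U_i)\subset\G(U_i)$ satisfies $\D(g_i)=g_i^{-1}\nabla g_i=g_i^{-1}(g_i\,d\alpha_i)=d\alpha_i=\beta|_{U_i}$, using $\nabla z=dz$ on the centre. Setting $c_{ij}:=g_i^{-1}g_j$ we get $\D(c_{ij})=-\beta+\beta=0$ on $U_i\cap U_j$, so $c_{ij}\in\Ker\D=\AH^\times$; in fact $c_{ij}=e^{\alpha_j-\alpha_i}$ with $\alpha_j-\alpha_i$ locally constant (its differential is $\beta-\beta=0$), so $c$ is a \v{C}ech $1$-cocycle valued in the constants $\Cht\subset\AH^\times$. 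By the truncated exact sequence in non-Abelian cohomology attached to \ref{sessheaves} (section 2.7 of \cite{NAC}), the form $\beta$ lies in the image of $\D$ on global sections precisely when the class $[c]\in\chec^1(\mathcal{U},\AH^\times)$ is trivial.

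To dispatch $[c]$ I would apply $P_2$ of \ref{P1P2P3}, which has trivial kernel, so it suffices to prove $P_2([c])=[1]$. As $P_2$ is reduction modulo $\hbar$ one has $c_{ij}\equiv e^{\lambda_{ij}}\pmod\hbar$ with $\lambda_{ij}:=(\alpha_j-\alpha_i)_0\in\C$, and these constants form a \v{C}ech $1$-cocycle $\lambda$ with values in the constant subsheaf $\C\subset C^\infty(M)$. Because $C^\infty(M)$ is fine, $\chec^1(\mathcal{U},C^\infty(M))=0$ (the same vanishing exploited throughout \ref{P1P2P3}), so $\lambda=\delta\mu$ for a $0$-cochain $\mu=(\mu_i)$ with $\mu_i\in C^\infty(U_i)$; hence $e^{\lambda_{ij}}=e^{\mu_j}e^{-\mu_i}$ exhibits $P_2([c])=[e^\lambda]$ as a coboundary in $\chec^1(\mathcal{U},C^\infty(M)^\times)$. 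Thus $P_2([c])=[1]$, whence $[c]=[1]$, and $\beta$ lifts to a global $g\in\G$ with $\D(g)=\beta$. Since $\beta$ was arbitrary this gives surjectivity.

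The crux, and the step I expect to carry all the content, is this last reduction: the obstruction cocycle is \emph{a priori} a nontrivial class — its reduction is the flat $\C^\times$-bundle whose monodromy is $\exp$ of the class $[\beta_0]\in\CH^1(M,\C)$, which can be nonzero, so $[c]$ is in general not trivial in $\chec^1(\mathcal{U},\Cht)$ — yet it becomes trivial once one is allowed non-constant smooth transition functions. This is exactly the fineness of $C^\infty(M)$ combined with the trivial-kernel property of $P_2$, the latter transporting the lowest-order triviality back up to all orders in $\hbar$. Everything else (local solvability, the cocycle identity $c_{ij}c_{jk}=c_{ik}$, and independence of the class $[c]$ from the chosen local primitives) is routine.
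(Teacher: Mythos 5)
Your argument is correct, but it is genuinely different from the paper's. You fix a target $\beta\in Z^1(M)\hhbar$, take the central local preimages $e^{\alpha_i}$ with $d\alpha_i=\beta|_{U_i}$, and identify the obstruction to gluing as the class $[c]\in\chec^1\left(\mathcal{U},\AH^\times\right)$ of the constants $c_{ij}=e^{\alpha_j-\alpha_i}\in\Cht$; you then kill it by pushing forward under $P_2$ and invoking fineness of $C^\infty(M)$. The descent from $P_2([c])=[1]$ back to $[c]=[1]$ is a legitimate use of exactness of the non-Abelian $\chec^1$-sequence at the middle term, since \ref{P1P2P3} shows $\chec^1\left(\mathcal{U},\left(\AH^\times\right)_1\right)=\{1\}$; and if one prefers not to quote that formalism, the step unwinds elementarily: $c_{ij}=k_ik_j^{-1}$ with $k_i\in\AH^\times(U_i)$ lets the sections $e^{\alpha_i}k_i^{-1}$ glue to a global element of $\G$ mapping to $\beta$. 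The paper instead computes $\Ima\D$ ``from below'': it introduces the map $H\colon\Ima\D\rightarrow\chec^1\left(\mathcal{U},\C\hhbar^\times\right)$ built from the local decompositions $g|_{U_i}=f(i)\cdot k(i)$ of \ref{decompG}, shows $\Ker H=\Ima D$ and $\Ima H=\Ker I\cap\Ker Y$, and then matches the successive quotients $d\mathcal{Z}\subset\Ima D\subset\Ima\D$ against those of $Z^1(M)\hhbar$ via the preceding proposition on $\Ima D$ and the five lemma. Your obstruction-theoretic route is shorter and more transparent for the bare surjectivity statement; the paper's route additionally exhibits how $\Ima D$ sits inside $\Ima\D$, which is the structure displayed in the right-hand column of \ref{computingsquare} (though that identification ultimately rests on the separate proposition about $D$, so nothing downstream is lost by your argument). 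Two minor points: $\D$ is a homomorphism \emph{into} the additive group $Z^1\hhbar$ (surjectivity being precisely what is at stake), and the constancy of $\alpha_j-\alpha_i$ uses connectedness of the pairwise intersections of the good cover, which you should state.
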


\begin{proof}
	
	\leavevmode
	
	\noindent In order to show the proposition, we will show that 
	\[\frac{\Ima \D\diagup\Ima D}{\chec^1\left(\mathcal{U};1+\hbar\C\hhbar\right)}\simeq \frac{\CH^1\left(M;\C\right)}{\CH^1\left(M;\Z\right)}.\] Then, a triple application of the 
	five lemma 
	will show that $\Ima \D =Z^1\left(M\right)\hhbar$. The inclusion of ${\chec^1\left(\mathcal{U};1+\hbar\C\hhbar\right)}$ in $\frac{\Ima\D}{\Ima D}$ should be evident from 
	the proof.
	
	\vspace{0.3cm} 
	
	Note that, if $g\in\G$ and $U\subset M$ is a coordinate neighborhood (or any other neighborhood such that $\CH^1 \left(U\right)=0$), then $g|_U=f\cdot k$ with $f\in\mathcal{Z}^\times|_U$ 
	and $k\in\Ker\nabla^\times|_U\simeq{\AH}^\times|_U$ by point (v) of lemma \ref{Gstuff}. 
	Now let $H\colon\Ima \D\rightarrow \chec^1\left(\mathcal{U};\C\hhbar^\times\right)$ be the map given by $H\left(\D g\right)=[\eta]$ where 
	$\eta\left(i,j\right)=\frac{f\left(i\right)}{f\left(j\right)}$ with the $f(i)$ given by the decompositions $g|_{U_i}=f\left(i\right)\cdot k\left(i\right)$. Again we leave it to the reader 
	to check that 
	this map is well-defined. 
	
	\vspace{0.3cm} 
	
	Now suppose $\D g\in\Ker H$ then $g|_{U_i}=f\left(i\right)\cdot k\left(i\right)$ and $\frac{f\left(i\right)}{f\left(j\right)}=\frac{c\left(j\right)}{c\left(i\right)}$ with 
	$c\left(i\right)\in\C\hhbar^\times$ for all $i$. This means that $g|_{U_i}=f\left(i\right)c\left(i\right)\cdot
	\frac{k\left(i\right)}{c\left(i\right)}$ and $\frac{f\left(i\right)c\left(i\right)}{f\left(j\right)c\left(j\right)}=1$, so $\exists fc\in\mathcal{Z}^\times$ such that
	$fc|_{U_i}=f\left(i\right)c\left(i\right)$. On the other hand we have that $k\left(i\right)\cdot k\left(j\right)^{-1}=\frac{c\left(i\right)}{c\left(j\right)}$, so $\exists \frac{k}{c}\in{\AH}^\times$
	such that $\frac{k}{c}|_{U_i}=\frac{k\left(i\right)}{c\left(i\right)}$. This shows that $g=fc\cdot\frac{k}{c}$ and thus $\Ker H=\Ima D$. So we conclude that 
	\[\frac{\Ima \D}{\Ima D}=\frac{\Ima \D}{\Ker H}\simeq \Ima H\subset \chec^1\left(\mathcal{U};\C\hhbar^\times\right).\] 
	
	\vspace{0.3cm}
	
	Now suppose $[\lambda]\in \chec^1\left(\mathcal{U};\C\hhbar^\times\right)$ such that $\exists f\in\check{\mathrm{C}}^0\left(\mathcal{U};\mathcal{Z}^\times\right)$ and 
	$k\in\check{\mathrm{C}}^0\left(\mathcal{U};{\AH}^\times\right)$ such that 
	$\frac{f\left(i\right)}{f\left(j\right)}=\lambda\left(i,j\right)=k\left(j\right)k\left(i\right)^{-1}$ for all $\left(i,j\right)\in J^2$. Then 
	$\frac{f\left(i\right)}{f\left(j\right)}k\left(i\right)k\left(j\right)^{-1}=\frac{\lambda\left(i,j\right)}{\lambda\left(i,j\right)}=1$ for all $\left(i,j\right)\in J^2$. So 
	$\exists g\in \G$ such that 
	$g|_{U_i}=f\left(i\right)\cdot k\left(i\right)$ and $H\left(\D g\right)=[\lambda]$. Conversely, if $[\lambda]=H\left(\D g\right)$ for some $g\in\G$, then obviously such 
	$0$-cochains exist. So we find that $\Ima H=\Ker I\cap \Ker Y$, for 
	$I$ and $Y$ the maps induced by the inclusions 
	\[{\AH}^\times\stackrel{Y}{\hookleftarrow}\C\hhbar^\times\stackrel{I}{\hookrightarrow} \mathcal{Z}^\times.\]

	\vspace{0.3cm}
	
	Now, by the lemma \ref{P1P2P3}, we see that $\Ker I=\Ker P_1\circ I$ and $\Ker Y=\Ker P_2 \circ Y$ and in fact these maps agree, i.e. $R:=P_1\circ I=P_2\circ Y$. They agree since they are all simply the map induced by 
	\[\begin{matrix} \C\hhbar^\times&\rightarrow&C^\infty\left(M\right)^\times\\
	c&\mapsto&c_0
	\end{matrix}.\]
	
	This map factors like $R=\partial\circ P$, where $P$ is induced by the projection $\C\hhbar^\times\twoheadrightarrow\C^\times$ and $\partial$ is induced by the inclusion of 
	$\C^\times$ in $C^\infty\left(M\right)^\times$. 
	Now, since $\C\hhbar^\times\simeq \C^\times\times\left(1+\hbar\C\hhbar\right)$ in the obvious way, we see that $\chec^1\left(\mathcal{U};1+\hbar\C\hhbar\right)=\Ker P$. So, we find that  
	\[\frac{\Ima H}{\Ker P}=\frac{\Ker R}{\Ker P}\simeq \Ker\partial.\]
	If we identify $\chec^1\left(\mathcal{U};C^\infty\left(M\right)^\times\right)\simeq \chec^2\left(\mathcal{U};\Z\right)$ using the exponential sequence \eqref{expsmooth},
	then we see that 
	$\partial$ is simply the connecting map in the exponential  sequence \eqref{expconstant} and this shows that $\Ker\partial\simeq\frac{\chec^1\left(\mathcal{U};\C\right)}{\chec^1\left(\mathcal{U};\Z\right)}$, which proves the claim. 
\end{proof}
In light of various exact sequences that appear in the following section, we will be able to use the following proposition. 

\begin{proposition}\label{H2} 
	Let \[0\rightarrow A\stackrel{i}{\longrightarrow} E \stackrel{\pi}{\longrightarrow} G\rightarrow 1\] be a $\Gamma$-equivariant \emph{central} extension, then we also have the exact sequence 
	\[0\rightarrow A^\Gamma\longrightarrow E^\Gamma\longrightarrow G^\Gamma\longrightarrow \CH^1(\Gamma;A)\longrightarrow  \CH^1(\Gamma;E)\longrightarrow \CH^1(\Gamma;G)\longrightarrow \CH^2(\Gamma;A).\] 
\end{proposition}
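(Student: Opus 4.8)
The plan is to recognize this as the tail of the long exact cohomology sequence attached to a short exact sequence of (possibly non-Abelian) $\Gamma$-groups. The generic such sequence of pointed sets terminates at $H^1(\Gamma,G)$, and the hypothesis that $A$ be \emph{central} is exactly what is needed to extend it by one further connecting map into the (now Abelian, hence genuinely group-valued) $H^2(\Gamma,A)$. One could simply invoke section 2.7 of \cite{NAC}; the proof below indicates the construction of the maps and isolates the points where centrality enters. Throughout, $H^0(\Gamma,-)=(-)^\Gamma$, a $1$-cocycle is a map $c\colon\Gamma\to(-)$ satisfying the inhomogeneous identity $c_{\gamma\mu}=c_\gamma\,\gamma(c_\mu)$ (with the order dictated by the paper's $\Gamma^{op}$-convention), and $H^1(\Gamma,-)$ is the pointed set of such cocycles modulo the relation $c\sim c'$ whenever $c'_\gamma=b^{-1}c_\gamma\,\gamma(b)$ for a fixed element $b$.

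First I would record the six-term portion
\[1\to A^\Gamma\to E^\Gamma\to G^\Gamma\stackrel{\delta_0}{\longrightarrow} H^1(\Gamma,A)\to H^1(\Gamma,E)\to H^1(\Gamma,G),\]
which is valid already for $A$ normal. The outer maps are functorially induced by $i$ and $\pi$. The connecting map $\delta_0$ sends $g\in G^\Gamma$ to the class of $\gamma\mapsto \tilde g^{-1}\gamma(\tilde g)$, where $\tilde g\in E$ is any lift of $g$: since $g$ is invariant one has $\pi(\tilde g^{-1}\gamma(\tilde g))=1$, so this cochain is $A$-valued, and a short computation shows it is a cocycle whose class is independent of the lift. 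Exactness at $E^\Gamma$, $G^\Gamma$, $H^1(\Gamma,A)$ and $H^1(\Gamma,E)$ then follows from routine diagram chases; none of these steps requires centrality and they may be quoted from the standard theory.

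The heart of the argument, and the only place where centrality is essential, is the construction of the terminal connecting map $\delta_1\colon H^1(\Gamma,G)\to H^2(\Gamma,A)$ together with exactness at $H^1(\Gamma,G)$. Given a cocycle $c\colon\Gamma\to G$, I would choose set-theoretic lifts $e_\gamma\in E$ with $\pi(e_\gamma)=c_\gamma$ and form the $2$-cochain
\[a_{\gamma,\mu}=e_\gamma\,\gamma(e_\mu)\,e_{\gamma\mu}^{-1}.\]
The cocycle identity for $c$ gives $\pi(a_{\gamma,\mu})=1$, so $a$ is $A$-valued. The key computation, which uses that $A$ is central in $E$ so that its elements commute freely past the $e$'s, is that $a$ satisfies the Abelian $2$-cocycle identity; centrality is likewise what makes the class $[a]\in H^2(\Gamma,A)$ independent of the chosen lifts and dependent only on the class of $c$, yielding a well-defined $\delta_1$. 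For exactness at $H^1(\Gamma,G)$ I would argue by preimages of the base point: if $c=\pi_*c'$ for an $E$-valued cocycle $c'$, then taking $e_\gamma=c'_\gamma$ makes $a_{\gamma,\mu}=1$, so $\delta_1[c]=0$; conversely, if $\delta_1[c]=0$ then $a$ is an Abelian $2$-coboundary, say $a_{\gamma,\mu}=b_\gamma\,\gamma(b_\mu)\,b_{\gamma\mu}^{-1}$ with $b\colon\Gamma\to A$, and replacing $e_\gamma$ by $b_\gamma^{-1}e_\gamma$ produces genuine lifts with vanishing failure term, i.e.\ an $E$-valued cocycle mapping to $c$, so $[c]\in\Ima\pi_*$.

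The main obstacle is bookkeeping rather than conceptual: pinning down the precise cocycle and coboundary conventions compatible with the paper's right-action/$\Gamma^{op}$ setup, and then carrying out the two centrality-dependent verifications, namely that $a_{\gamma,\mu}$ is an Abelian $2$-cocycle and that $\delta_1$ descends to cohomology. Everything upstream of $H^1(\Gamma,G)$ is the standard non-Abelian exact sequence and can be cited; the genuine work is confirming exactness at the final spot and checking that centrality is used exactly where claimed.
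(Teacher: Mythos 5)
Your proposal is correct and follows essentially the same route as the paper: quote the standard six-term non-Abelian sequence, then construct the extra connecting map $H^1(\Gamma,G)\to H^2(\Gamma,A)$ by lifting a $G$-valued cocycle to $E$ and taking the coboundary of the lift, with centrality of $A$ invoked exactly where the paper invokes it (the $2$-cocycle identity, independence of lift and of representative, and exactness at $H^1(\Gamma,G)$ via correcting the lift by an Abelian $1$-cochain). The only remaining work is the bookkeeping you already flagged, namely aligning the cocycle conventions with the paper's $\Gamma^{op}$ setup, in which the cocycle identity reads $c_{\mu\gamma}=c_\gamma\,\gamma(c_\mu)$ rather than $c_{\gamma\mu}=c_\gamma\,\gamma(c_\mu)$.
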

\begin{proof} 
	
	\leavevmode
	
	\noindent As mentioned we only need to extend the sequence to include $\CH^2(\Gamma;A)$ (see section 2.7 of \cite{NAC}). We simply do the usual (Abelian) construction and show that it still works 
	because the extension is central. So suppose $[\eta]\in \CH^1(\Gamma;G)$ and let $E_\gamma\mapsto \eta_\gamma$ for all $\gamma\in\Gamma$. Then $(\delta E)_{\gamma,\mu}\in\Ker\pi$ for all 
	$\gamma,\mu\in\Gamma$, so $a:=\delta E\colon \Gamma^2\rightarrow A$. By  writing out and noting that the computation takes place in the Abelian group $A$ we find 
	\[(\delta a)_{\gamma,\mu,\chi}=(\delta E)^{-1}_{\gamma\mu,\chi}\gamma((\delta E)_{\mu,\chi})(\delta E)_{\gamma,\mu\chi}(\delta E)^{-1}_{\gamma,\mu}= 
	E_{\gamma\mu\chi}E^{-1}_{\gamma\mu}(\delta E)^{-1}_{\gamma,\mu}\gamma(E_\mu)E_\gamma E^{-1}_{\gamma\mu\chi}=1
	\]
	So that $[a]\in \CH^2(\Gamma;A)$ and we will need to show that this map $[\eta]\mapsto [a]$ is well-defined. First suppose that $\tilde{E}_\gamma$ is another lift of $\eta_\gamma$ for each 
	$\gamma\in\Gamma$. Then note that $\tilde{E}_\gamma E_{\gamma}^{-1}$ and $\tilde{E}^{-1}_\gamma E_\gamma$ are in $A$ and thus central for all $\gamma\in\Gamma$, so we have 
	\[(\delta \tilde{E})_{\gamma,\mu}(\delta E)^{-1}_{\gamma,\mu}=(\delta \tilde{E}E)_{\gamma,\mu}\hspace{0.3cm}\forall \gamma,\mu\in \Gamma\]
	where $(\tilde{E}E)_\gamma=\tilde{E}_\gamma E_\gamma$. So both lifts will define cohomologous cocycles in $\CH^2(\Gamma;A)$. Now suppose $E'_\gamma$ lifts $\gamma(x)\eta_\gamma x^{-1}$ 
	for all $\gamma\in\Gamma$ and some $x\in G$. Then, if $\pi(X)=x$, we note that the element $(X\cdot E')_\gamma:=\gamma(X)^{-1}E_\gamma X$ lifts $\eta_\gamma$ for all $\gamma\in\Gamma$. Then note that 
	\[(\delta(X\cdot E'))_{\gamma,\mu}=\Ad (\gamma\mu)(X^{-1})((\delta E)_{\gamma,\mu})=(\delta E)_{\gamma,\mu}\hspace{0.3cm}\forall \gamma,\mu\in \Gamma,\] 
	which shows, when combined with the previous remark (concerning $\tilde{E}$), that the implied map in cohomology $\CH^1(\Gamma;G)\rightarrow \CH^2(\Gamma;A)$ is well-defined. 
	Lastly we should show that the sequence is exact at $\CH^1(\Gamma;G)$. So suppose $[\eta]\in \CH^1(\Gamma;G)$ maps to $0$ under the map described above. Then there is a lift $E_\gamma$ for 
	every $\eta_\gamma$ for all $\gamma\in\Gamma$ such that $(\delta E)_{\gamma,\mu}=(\delta \alpha)_{\gamma,\mu}$ for some $\alpha\colon\Gamma\rightarrow A$ and all $\gamma,\mu\in \Gamma$. 
	Then consider $E\alpha^{-1}$ given by $(E\alpha^{-1})_\gamma=E_\gamma\alpha^{-1}_\gamma$ and note that 
	\[(\delta E\alpha^{-1})_{\gamma,\mu}=(\delta E)_{\gamma,\mu}(\delta \alpha)_{\gamma,\mu}^{-1}=1\hspace{0.3cm}\forall \gamma,\mu\in \Gamma^2,\] 
	but also $E\alpha^{-1}\mapsto \eta$ under $\pi$. This shows that $[\eta]$ is in the image of $\CH^1(\Gamma;E)\rightarrow \CH^1(\Gamma;G)$. 
\end{proof}

\subsection{Computations}\label{5.3.3} 

Let us show in this section how the tools developed in the previous section can be applied to computation. In particular we will show that, under sufficiently restrictive assumptions on the cohomology of the 
group $\Gamma$ and the first cohomology of the manifold $M$, we find that there is a unique extension of the action to any deformation quantization. We will also show that 
in general the extensions are  not unique even when the first cohomology of the manifold vanishes and the group acts faithfully. Most of the computations here are done by applying 
group cohomology to the diagram \eqref{computingsquare}. 

\begin{proposition}\label{casefsphere}
	Suppose $\CH^1(M)=0$ and $|\Gamma|<\infty$,  then we have $\CH^1(\Gamma;\GZ)=\{1\}$. 
\end{proposition}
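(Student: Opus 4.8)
The plan is to use the diagram \ref{computingsquare} to replace $\GZ$ by something with controllable equivariant cohomology, and then to split the computation into a ``leading order in $\hbar$'' piece and a ``higher order'' piece. First I would note that the hypothesis $\CH^1(M)=0$ collapses the right-hand column: since $T^1_\hbar(M)=\bigslant{\CH^1(M,\C)}{\CH^1(M,\Z)}\oplus\hbar\CH^1(M)\hhbar$ and $H^1(M,\Z)$ is torsion-free, $\CH^1(M,\C)=0$ forces $\CH^1(M,\Z)=0$ too, so $T^1_\hbar(M)=0$. The bottom row of \ref{computingsquare} then gives $\GZ\simeq\bigslant{\AH^\times}{\C\hhbar^\times}$ (exactly as remarked in the introduction), so it suffices to prove $\CH^1(\Gamma,\bigslant{\AH^\times}{\C\hhbar^\times})=\{1\}$.

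Next I would reduce modulo $\hbar$. Reduction of the leading coefficient is a $\Gamma$-equivariant surjection $\bigslant{\AH^\times}{\C\hhbar^\times}\twoheadrightarrow\bigslant{C^\infty(M)^\times}{\C^\times}$ with kernel $K:=\bigslant{(1+\hbar\AH)}{(1+\hbar\C\hhbar)}$, giving a short exact sequence of $\Gamma$-groups
\[1\longrightarrow K\longrightarrow\GZ\longrightarrow\bigslant{C^\infty(M)^\times}{\C^\times}\longrightarrow 1.\]
This induces an exact sequence of pointed sets $\CH^1(\Gamma,K)\to\CH^1(\Gamma,\GZ)\to\CH^1(\Gamma,\bigslant{C^\infty(M)^\times}{\C^\times})$ (section 2.7 of \cite{NAC}). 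If the base term is trivial, then all of $\CH^1(\Gamma,\GZ)$ lies in the fiber over the basepoint, which is the image of $\CH^1(\Gamma,K)$; so proving both outer terms trivial finishes the proof. For $\CH^1(\Gamma,K)=\{1\}$ I would run the filtration argument of \ref{P1P2P3} verbatim, as licensed by the remark following it (valid for group cohomology once $\CH^1(\Gamma,C^\infty(M))$ vanishes). Filtering $K$ by the images of $1+\hbar^n\AH$, the successive quotients are isomorphic to $\bigslant{C^\infty(M)}{\C}$, a $\C$-vector space; since $\Gamma$ is finite, $\CH^k(\Gamma,V)=0$ for every $\C$-vector space $V$ and $k\geq1$, and the $\hbar$-adic completeness of $\AH$ makes the successive approximations converge exactly as in \ref{P1P2P3}.

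The main obstacle is the base term $\CH^1(\Gamma,\bigslant{C^\infty(M)^\times}{\C^\times})$, where one might fear a contribution from the Schur multiplier $\CH^2(\Gamma,\C^\times)$, which for finite $\Gamma$ need not vanish. The point is that this contribution cancels. Since the group is abelian I would use the ordinary long exact sequence of $1\to\C^\times\to C^\infty(M)^\times\to\bigslant{C^\infty(M)^\times}{\C^\times}\to 1$, and compare it against the exponential sequences \ref{expconstant} and \ref{expsmooth}, which share the \emph{same} copy of $\Z$. The inclusion of constants gives a $\Gamma$-equivariant morphism between these two exponential sequences; because $\CH^k(\Gamma,\C)=\CH^k(\Gamma,C^\infty(M))=0$ for $k\geq1$ (finite $\Gamma$, $\C$-vector spaces), the connecting maps identify both $\CH^k(\Gamma,\C^\times)$ and $\CH^k(\Gamma,C^\infty(M)^\times)$ with the same $\CH^{k+1}(\Gamma,\Z)$ for all $k\geq1$, compatibly. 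Hence $\C^\times\hookrightarrow C^\infty(M)^\times$ induces isomorphisms $\CH^k(\Gamma,\C^\times)\xrightarrow{\sim}\CH^k(\Gamma,C^\infty(M)^\times)$ for every $k\geq1$, and $\CH^1(\Gamma,\bigslant{C^\infty(M)^\times}{\C^\times})$, sitting between these isomorphisms in the long exact sequence, is forced to vanish.

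Assembling the pieces, the three vanishing statements feed into the exactness of the pointed-set sequence from the reduction-mod-$\hbar$ extension and yield $\CH^1(\Gamma,\GZ)=\{1\}$. I expect the only genuinely delicate point to be the cancellation in the previous paragraph: the naive guess that equivariant extensions are parametrized by the Schur multiplier is defeated precisely because the constant functions make the integral skeletons of $\C^\times$ and $C^\infty(M)^\times$ agree; everything else is either the filtration bookkeeping of \ref{P1P2P3} or the vanishing of finite-group cohomology with divisible (here $\C$-linear) coefficients.
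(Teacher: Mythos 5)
Your proof is correct, and it organizes the computation differently from the paper's. Both arguments begin from the same identification $\GZ\simeq\bigslant{\AH^\times}{\Cht}$ forced by $\CH^1(M)=0$, and both run on the same two engines: the $\hbar$-filtration-and-completeness argument of \ref{P1P2P3} (valid here because $\CH^k(\Gamma,V)=0$ for $k\geq 1$ whenever $\Gamma$ is finite and $V$ is a $\C$-vector space) and the comparison of the two exponential sequences \ref{expconstant}, \ref{expsmooth}, which identifies $\CH^k(\Gamma,\C^\times)$ with $\CH^k(\Gamma,C^\infty(M)^\times)$ (note this comparison genuinely needs $\CH^1(M,\Z)=0$ so that \ref{expsmooth} is exact on global sections, which you secure at the outset). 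The difference is the choice of dévissage. The paper works with the \emph{central} extension $1\to\Cht\to\AH^\times\to\GZ\to 1$ and must therefore invoke Proposition \ref{H2} to push the pointed-set sequence out to $\CH^2(\Gamma,\Cht)$, then separately prove that the connecting map $\CH^1(\Gamma,\GZ)\to\CH^2(\Gamma,\Cht)$ vanishes (via an explicit lift-and-correct computation on $2$-cocycles) and that $\CH^1(\Gamma,\Cht)\to\CH^1(\Gamma,\AH^\times)$ is surjective. You instead slice along the $\hbar$-filtration, using the (non-central but normal, $\Gamma$-stable) extension $1\to K\to\GZ\to\bigslant{C^\infty(M)^\times}{\C^\times}\to 1$ with $K=\bigslant{(1+\hbar\AH)}{(1+\hbar\Ch)}$, and kill both outer $\CH^1$'s; triviality of the total space then follows from exactness at the middle term of the pointed-set sequence alone. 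This buys you a proof that never leaves degree one for the non-abelian coefficients and dispenses with Proposition \ref{H2} entirely; what it gives up is the explicit description of $\CH^1(\Gamma,\GZ)$ as squeezed between $\CH^1(\Gamma,\AH^\times)$ and $\CH^2(\Gamma,\Cht)$, which the paper reuses in the later examples \ref{Zaction} and \ref{T2irr} where the group is infinite. Two small points you gloss over, neither of which is a gap: the quotient $K$ is complete for the induced filtration (needed for the Cauchy-sequence step), and the successive quotients $\bigslant{C^\infty(M)}{\C}$ carry the pullback action, which is $\C$-linear, so the finite-group vanishing indeed applies.
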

\begin{proof} 
	
	\leavevmode
	
	\noindent By vanishing of $\CH^1(M)$ and lemma \ref{Gstuff} we have the exact sequence 
	\begin{equation}\label{H10seq}1\rightarrow \Cht\longrightarrow\AH^\times\longrightarrow \GZ\rightarrow 1,\end{equation}
	since $\G=\cZ^\times\cdot\AH^\times$ and $\cZ^\times\cap \AH^\times=\Cht$.
	
	\vspace{0.3cm}
	
	The proof proceeds in two steps. First we show that the connecting map 
	\[\CH^1\left(\Gamma;\GZ\right)\rightarrow \CH^2\left(\Gamma;\Ch^\times\right)\] is trivial. 
	Secondly we show that the map \[\CH^1\left(\Gamma;\Ch^\times\right)\rightarrow\CH^1\left(\Gamma;\AH^\times\right)\] is surjective. Given these two facts, the proposition 
	is implied by proposition \ref{H2}. In the following we shall denote the differential of the cochain $c$ by $\delta c$. 
	
	\vspace{0.3cm}
	
	Suppose $\eta\colon\Gamma\rightarrow\GZ$ is a cocycle. We will first show that the image of the class of $\eta$ under the connecting map can be represented by a
	cocycle $c$ with values in $\C^\times$. So suppose  $E\colon\Gamma\rightarrow \AH^\times$ lifts $\eta$. By proposition \ref{H2} we find that $\delta E$ is a $2$-cocycle in $\Cht$ for the trivial action. Then, by the fact that $\CH^2(\Gamma;\C)=0$ and an analogous 
	argument to the one used in the proof of lemma \ref{P1P2P3}, we find $\lambda\colon\Gamma\rightarrow \Cht$ such that $(\delta\lambda)(\delta E)$ has values in $\C^\times$. Now note that, if $\tilde{E}$ denotes the cochain given by 
	$\tilde{E}_\gamma=\lambda_\gamma E_\gamma$ for all $\gamma\in\Gamma$, then $\tilde{E}$ also lifts $\eta$ and $\delta\tilde{E}=(\delta\lambda)(\delta E)$ (since $\Cht$ is central in $\AH^\times$). But then, if we write 
	$\tilde{E}=\sum \hbar^kE_k$, we find that 
	\[(\delta\tilde{E})_{\gamma,\mu}=(\delta E_0)_{\gamma,\mu}+\hbar S_{\gamma,\mu},\] 
	for all $\gamma,\mu\in\Gamma$, here we consider $\delta E_0$ as the boundary of the cochain $E_0\colon\Gamma\rightarrow C^\infty(M)^\times$. Now the fact that 
	$\delta\tilde{E}_{\gamma,\mu}\in\C^\times$ implies that $S=0$. So we find that $\delta\tilde{E}=\delta E_0$ as cochains with values in $\C^\times$. So, we 
	have found the cocycle $c:=\delta E_0$ representing the image of the class of $\eta$ under the connecting map. Note that, since $\CH^1(M;\Z)=0$, the inclusion of $\C$ in $C^\infty(M)$ and the exponential sequences \eqref{expconstant} and \eqref{expsmooth} induce the 
	commuting diagram with exact rows
	\[
	\begin{tikzpicture} 
	\draw node at (0,0) {$\CH^2(\Gamma;\Z)$};
	\path [draw,->] (.7,0) -- (2.2,0);
	\path [draw,->] (0,-.3) -- (0,-1.7);
	\draw node at (3,0) {$\CH^2(\Gamma;\C)$}; 
	\path [draw,->] (3.8,0) -- (5.1,0);
	\path [draw,->] (3,-.3) -- (3,-1.6);
	\draw node at (6,0) {$\CH^2(\Gamma;\C^\times)$}; 
	\path [draw,->] (6.9,0) -- (8.2,0);
	\path [draw,->] (6,-.3) -- (6,-1.7);
	\draw node at (9,0) {$\CH^3(\Gamma;\Z)$}; 
	\path [draw,->] (9.7,0) -- (11.2,0);
	\path [draw,->] (9,-.3) -- (9,-1.7);
	\draw node at (12,0) {$\CH^3(\Gamma;\C)$}; 
	\path [draw,->] (12,-.3) -- (12,-1.7);
	\draw node at (0,-2) {$\CH^2(\Gamma;\Z)$}; 
	\path [draw,->] (.7,-2) -- (2,-2);
	\draw node at (3,-2) {$\CH^2(\Gamma;C^\infty)$}; 
	\path [draw,->] (3.9,-2) -- (4.8,-2);
	\draw node at (6,-2) {$\CH^2(\Gamma;(C^\infty)^\times)$}; 
	\path [draw,->] (7.2,-2) -- (8.2,-2);
	\draw node at (9,-2) {$\CH^3(\Gamma;\Z)$}; 
	\path [draw,->] (9.7,-2) -- (11,-2);
	\draw node at (12,-2) {$\CH^3(\Gamma;C^\infty)$};
	\end{tikzpicture},
	\]
	where we have abbreviated $C^\infty:=C^\infty(M)$. 
	So, we have $\CH^2(\Gamma;\C^\times)\simeq \CH^2(\Gamma;C^\infty(M)^\times)$ by the five lemma, but then $[c]$ is trivial, since it is trivial in 
	$\CH^2(\Gamma;C^\infty(M)^\times)$ by construction. So
	the map $\CH^1(\Gamma;\GZ)\rightarrow \CH^2(\Gamma;\Cht)$ is the zero map. 
	
	\vspace{0.3cm} 
	
	Now suppose $\eta\colon\Gamma\rightarrow\AH^\times$ is a cocycle. Then, writing $\eta=\sum\hbar^k\eta_k$, we find that \\$\eta_0\colon\Gamma\rightarrow C^\infty(M)^\times$ is also a cocycle. 
	As above, we find the commuting diagram with exact rows 
	\[
	\begin{tikzpicture} 
	\draw node at (0,0) {$\CH^1(\Gamma;\Z)$};
	\path [draw,->] (.7,0) -- (2.2,0);
	\path [draw,->] (0,-.3) -- (0,-1.7);
	\draw node at (3,0) {$\CH^1(\Gamma;\C)$}; 
	\path [draw,->] (3.8,0) -- (5.1,0);
	\path [draw,->] (3,-.3) -- (3,-1.6);
	\draw node at (6,0) {$\CH^1(\Gamma;\C^\times)$}; 
	\path [draw,->] (6.9,0) -- (8.2,0);
	\path [draw,->] (6,-.3) -- (6,-1.7);
	\draw node at (9,0) {$\CH^2(\Gamma;\Z)$}; 
	\path [draw,->] (9.7,0) -- (11.2,0);
	\path [draw,->] (9,-.3) -- (9,-1.7);
	\draw node at (12,0) {$\CH^2(\Gamma;\C)$}; 
	\path [draw,->] (12,-.3) -- (12,-1.7);
	\draw node at (0,-2) {$\CH^1(\Gamma;\Z)$}; 
	\path [draw,->] (.7,-2) -- (2,-2);
	\draw node at (3,-2) {$\CH^1(\Gamma;C^\infty)$}; 
	\path [draw,->] (3.9,-2) -- (4.8,-2);
	\draw node at (6,-2) {$\CH^1(\Gamma;(C^\infty)^\times)$}; 
	\path [draw,->] (7.2,-2) -- (8.2,-2);
	\draw node at (9,-2) {$\CH^2(\Gamma;\Z)$}; 
	\path [draw,->] (9.7,-2) -- (11,-2);
	\draw node at (12,-2) {$\CH^3(\Gamma;C^\infty)$};
	\end{tikzpicture}.
	\]
	So, again by the five lemma, we find $f\in C^\infty(M)^\times$ such that $(\delta f)\eta_0$ has values in $\C^\times$. Denote by $\tilde{\eta}$ the cocycle given by 
	$\tilde{\eta}_\gamma=\gamma^*(f)\eta_\gamma f^{-1}$, then \[\tilde{\eta}((\delta f)\eta_0)^{-1}=1+\hbar S\] for some 
	$S\colon\Gamma\rightarrow\AH$. So, using the exact sequences 
	\[1\rightarrow 1+\hbar^{k+1}\AH\longrightarrow 1+\hbar^k\AH\longrightarrow C^\infty(M)\rightarrow 0,\] 
	the fact that $\CH^1(\Gamma;C^\infty(M))=0$ and the same reasoning as in lemma \ref{P1P2P3}, we find that there exists $P\in 1+\hbar\AH$ such that 
	$\alpha_\gamma(P)(1+\hbar S_\gamma)P^{-1}=1$ for all $\gamma\in\Gamma$. Thus we find that 
	\[[\eta]=[\tilde{\eta}]=[(\delta f)\eta_0]\in \CH^1(\Gamma,\AH^\times)\] which means that the map 
	$\CH^1(\Gamma,\Cht)\rightarrow \CH^1(\Gamma;\AH^\times)$ is surjective. So, by the sequence implied by proposition \ref{H2} and \eqref{H10seq}, we find that 
	$\CH^1(\Gamma;\GZ)=\{1\}$. \end{proof}

\begin{corollary} 
	For the conditions in proposition \ref{casefsphere} there is, up to conjugation by a fixed automorphism, a \emph{unique} extension of the group action to \emph{any} deformation quantization 
	by proposition \ref{casefsphere} and  corollary \ref{finiteistrivial}.
\end{corollary}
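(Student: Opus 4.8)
The plan is to combine the existence result \ref{finiteistrivial} with the triviality of the classifying pointed set from \ref{casefsphere}, glued together by the main classification Theorem. First I would invoke \ref{finiteistrivial}: since $|\Gamma|<\infty$, the group $\Gamma$ is in particular a compact Lie group (a $0$-dimensional one), so any action by symplectomorphisms extends to any deformation quantization, and the extension may be realized by $A_\gamma=\gamma^*$ for a $\Gamma$-invariant Fedosov connection $\nabla$ obtained by averaging an affine connection together with a representative of the characteristic class. This settles \emph{existence} and, just as importantly, supplies a reference extended action. The latter is what defines the $\Gamma$-action on $\G$ (and hence on $\GZ$) in the first place, and it serves as the basepoint of the pointed set $\CH^1(\Gamma,\GZ)$ attached to $\nabla$.

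With this basepoint in hand I would appeal to the main classification Theorem, which asserts that $\CH^1(\Gamma,\GZ)$ classifies the extensions of the given action (in the sense of \ref{extension}) up to conjugation by a fixed inner automorphism of $\Gamma(\mathcal{W})$. The standing hypotheses $\CH^1(M)=0$ and $|\Gamma|<\infty$ are precisely those of \ref{casefsphere}, so that result gives $\CH^1(\Gamma,\GZ)=\{1\}$. A one-point classifying set means every extension lies in the equivalence class of the reference action $A_\gamma=\gamma^*$; that is, each is conjugate to it by a single fixed automorphism, which is exactly the asserted uniqueness up to conjugation.

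The corollary thus carries no genuinely new content: the two substantive inputs---existence via averaging in \ref{finiteistrivial} and the vanishing of the non-Abelian cohomology in \ref{casefsphere}---are already established. The only point demanding care is matching hypotheses across the two statements: verifying that a finite group qualifies as a compact Lie group for \ref{finiteistrivial}, and confirming that the extension it produces is admissible as the basepoint of the classification, so that the triviality of $\CH^1(\Gamma,\GZ)$ reads as uniqueness rather than vacuity. I expect no real obstacle beyond this bookkeeping, since the statement is a direct synthesis of the two preceding results.
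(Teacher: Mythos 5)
Your argument is correct and matches the paper's intent exactly: the corollary is stated without a separate proof precisely because it is the direct synthesis of the existence result \ref{finiteistrivial} (a finite group is a compact Lie group, so averaging yields an invariant connection and the basepoint extension $A_\gamma=\gamma^*$) with the vanishing $\CH^1(\Gamma,\GZ)=\{1\}$ from \ref{casefsphere}, read through the main classification theorem. Your additional care about the basepoint making the triviality of the pointed set mean uniqueness rather than vacuity is exactly the right bookkeeping and introduces nothing beyond what the paper relies on.
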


Note that proposition \ref{casefsphere} in particular contains the cases of the actions of $\Z/n\Z, D_{2n}, A_4, S_4$ and $A_5$ (finite subgroups of $SO(3)$) by symplectomorphisms on $S^2$. 
\begin{remark} 
	
	Note that the actual properties of the group that were used in the proof of proposition \ref{casefsphere} are 
	\begin{itemize}
		\item $\CH^i(\Gamma,\C)=\CH^i(\Gamma,C^\infty(M))=0$ for $i=1,2$ (for the trivial action on $\C$)
		\item $\CH^3(\Gamma,\C)\hookrightarrow \CH^3(\Gamma,C^\infty(M))$ (induced by the inclusion),
	\end{itemize}
	all of which are satisfied by finite groups. 
\end{remark}
\begin{remark}
	Let us consider for a moment the group $\Gamma$ acting trivially on the manifold $M$. Then of course we can lift this action to the trivial action on any deformation quantization and 
	the classification comes down to finding representations of $\Gamma$ as self gauge equivalences of the deformation. In this case we find that 
	\[\CH^1(\Gamma;\GZ)\simeq \bigslant{\Hom(\Gamma^{op},\GZ)}{\GZ}\] 
	where the quotient is taken with respect to the inclusion of $\GZ$ in $\Aut(\GZ)$ as inner automorphisms. It is to be expected that this yields non-equivalent extensions of the 
	trivial action of $\Gamma$. However, they would be rather pathological examples. Let us show that there may be non-equivalent extensions of the group action even when this action 
	$\Gamma\rightarrow\Symp(M)$ is injective. 
\end{remark}
\begin{example}\label{Zaction}
	Consider the action of $\Z$ on $S^2$ by rotation through an irrational angle $\theta$ around the vertical axis. Note that, since $\CH^2(S^2)\hhbar=\C\hhbar[\alpha]$ where $\alpha$ denotes 
	the standard symplectic 
	structure on $S^2$, we find that any action by symplectomorphisms that preserves an affine connection extends to any deformation quantization by proposition \ref{trivialextensions}. In particular, by uniqueness of the Levi-Civita connection, any action by symplectic isometries lifts to any deformation quantization. Note that
	we have $\CH^1(\Z;G)\simeq G/\sim$, where $g\sim h$ if $1(b)gb^{-1}=h$, whenever $\Z$ acts on any group $G$. Since we have $\CH^2(\Z;\C\hhbar^\times)=0$  and \eqref{H10seq}, we find 
	the 
	sequence
	\begin{equation}\label{seqS2irr}1\rightarrow \C\hhbar^\times\longrightarrow\left(\AH^\times\right)^\Z\longrightarrow \GZ^\Z\longrightarrow \C\hhbar^\times\longrightarrow 
	\AH^\times/{\sim}\longrightarrow \GZ/{\sim}\rightarrow 0.
	\end{equation}
	
	Suppose $c,c'\in\C\hhbar^\times$ such that their images in $\AH^\times/{\sim}$ coincide, i.e. there exists $g\in \AH^\times$ such that 
	$cr_\theta^*(g)=c'g$, where we denote the rotation inducing the action of $\Z$ by $r_\theta$ . Then note that, since the action has a fixed point at the north pole, we can evaluate $g$ there 
	to find that $c=c'$. So we find that the map $\C\hhbar^\times\rightarrow \AH^\times/{\sim}$ is injective. Now suppose $f\in\AH^\times$ such that there is $c\in\C\hhbar^\times$ and 
	$c\sim f$. Then, if we write $c=\sum \hbar^kc_k$ and $f=\sum \hbar^kf_k$, we find that there exists some $g_0\in C^\infty(S^2)^\times$ such that 
	$c_0r_\theta^*(g_0)=f_0g_0$ (for the undeformed product). Now note that this implies that $f_0$ must take the value $c_0$ at both the north and south pole. So, we find that the map 
	$\C\hhbar^\times\rightarrow \AH^\times/{\sim}$ is definitely not surjective, since there exist many non-vanishing functions on $S^2$ that do not have the same value at the north and 
	south pole. Then exactness of \eqref{seqS2irr} shows that $\CH^1(\Z;\GZ)\simeq \GZ/{\sim}\neq \{1\}$ and so there exist multiple extensions of this action to any deformation quantization. 
\end{example}

\begin{corollary} 
	Suppose $\Gamma=\Z$ acts on $(M,\omega)$ with $\CH^1(M)=0$ and there is more than $1$ fixed point, then $\CH^1(\Gamma;\GZ)\neq\{1\}$. 
\end{corollary}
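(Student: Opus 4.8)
The plan is to run the argument of Example \ref{Zaction} in the abstract, substituting for the two geometric features of the $S^2$ example (namely $\CH^1(S^2)=0$ and the two poles fixed by the irrational rotation) the two hypotheses $\CH^1(M)=0$ and ``more than one fixed point.'' First I would assemble the relevant exact sequence. Since $\CH^1(M)=0$, item (v) of \ref{Gstuff} holds on all of $M$, so $\G=\cZ^\times\cdot\AH^\times$ with $\cZ^\times\cap\AH^\times=\Cht$, giving the central extension \ref{H10seq}
\[1\rightarrow\Cht\longrightarrow\AH^\times\longrightarrow\GZ\rightarrow1.\]
The action $A$ is trivial on $\Cht$ (constants are fixed by every $A_\gamma$), so this is a $\Z$-equivariant central extension. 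As $\Z$ has cohomological dimension $1$ we have $\CH^2(\Z,\Cht)=0$, and feeding the extension into Proposition \ref{H2} reproduces exactly the sequence \ref{seqS2irr}, whose tail reads
\[\Cht\stackrel{\phi}{\longrightarrow}\AH^\times/{\sim}\longrightarrow\GZ/{\sim}\rightarrow0,\]
where I use $\CH^1(\Z,G)\simeq G/{\sim}$ (twisted conjugacy, as recalled in \ref{Zaction}) and $\CH^1(\Z,\Cht)\simeq\Cht$, and $\phi$ is induced by $\Cht\hookrightarrow\AH^\times$.

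The key step is to show that $\phi$ is not surjective, and this is where the fixed points enter. Let $\gamma\in\Symp(M,\omega)$ be the symplectomorphism underlying the generator $1\in\Z$, and recall that its action $A_1$ on $\AH^\times$ reduces modulo $\hbar$ to $\gamma^*$ (Definition of extension). I would argue that if $f\in\AH^\times$ satisfies $f\sim c$ for some constant $c\in\Cht$, then the leading term $f_0$ takes the single value $c_0$ at \emph{every} fixed point of $\gamma$. Indeed, $f\sim c$ means $f=1(b)\,c\,b^{-1}$ for some $b\in\AH^\times$; since $c$ is central this is $f=c\cdot 1(b)\,b^{-1}$, and reading off the lowest order in $\hbar$ gives $f_0=c_0\cdot\gamma^*(b_0)\,b_0^{-1}$ (undeformed product). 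Evaluating at a fixed point $p$, where $\gamma^*(b_0)(p)=b_0(\gamma(p))=b_0(p)$, collapses this to $f_0(p)=c_0$. Hence $[f]\in\Ima\phi$ forces $f_0$ to be constant along the fixed-point set.

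Given two distinct fixed points $p\neq q$, I would exhibit an $f$ violating this: choose a positive (hence nowhere-vanishing) function $f_0\in C^\infty(M)^\times$ with $f_0(p)\neq f_0(q)$, and view $f_0$ as an element of $\AH^\times$ (its leading term is invertible, so $f_0$ is $\star$-invertible by completeness in the $\hbar$-adic topology, and it determines a $\Z$-cocycle since $\Z$ is free). By the previous paragraph $[f_0]\notin\Ima\phi$. Finally, exactness of \ref{seqS2irr} at $\AH^\times/{\sim}$ identifies $\Ima\phi$ with the preimage of the basepoint under the surjection $\AH^\times/{\sim}\to\GZ/{\sim}$; since $[f_0]$ avoids $\Ima\phi$, its image in $\GZ/{\sim}\simeq\CH^1(\Z,\GZ)$ is not the basepoint, whence $\CH^1(\Z,\GZ)\neq\{1\}$.

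The only genuine content beyond diagram-chasing is the fixed-point evaluation, and the one point demanding care is that the generator acts on $\AH^\times$ through $A_1$ rather than through a naive pullback; the reduction to leading order is legitimate precisely because $A_1$ agrees with $\gamma^*$ modulo $\hbar$, and $\gamma^*$ fixes the values of $b_0$ at the fixed points. Everything else is a transcription of \ref{Zaction} with the poles replaced by two arbitrary fixed points.
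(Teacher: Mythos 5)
Your proof is correct and is essentially the paper's own argument: the corollary is stated as a direct generalization of Example \ref{Zaction}, and you reproduce that argument verbatim with the two poles of $S^2$ replaced by two arbitrary fixed points, including the key step that $f\sim c$ forces $f_0$ to equal $c_0$ on the fixed-point set. The only content beyond the example is the observation that the leading-order reduction of $A_1$ to $\gamma^*$ justifies the fixed-point evaluation, which you handle correctly.
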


Finally let us show that the diagram \eqref{computingsquare} allows us to make conclusions about $\CH^1(\Gamma;\GZ)$ even in the case that $\CH^1(M)\neq 0$. 

\begin{example}\label{T2irr}
	Consider the action of $\Z$ on $\T^2=\T\times\T$ by irrational rotation of one of the coordinates. Then we find that the induced action on $T^1_\hbar(\mathbb{T}^2)$ is trivial and so 
	\[\CH^1(\Z;T^1_\hbar(\mathbb{T}^2))=T^1_\hbar(\mathbb{T}^2)\] 
	by the discussion in the beginning of example \ref{Zaction}. Suppose $0\neq l\in T^1_\hbar(\mathbb{T}^2)$, then we can lift this to $[0]\neq [z]\in \CH^1(\Z;Z^1(M)\hhbar)\simeq Z^1(M)\hhbar/{\sim}$, 
	since $\CH^2(\Z;\mathcal{Z}^\times/\C\hhbar^\times)=0$. By proposition \ref{brunt} we can find $g\in\G$ such that 
	$\mathbb{D}g=z$. In particular $g$ represents a non-trivial class in $\CH^1(\Z;\G)$. Its image $\overline{g}$ in $\CH^1(\Z;\GZ)$ must, by commutativity of \eqref{computingsquare}, be mapped 
	to the non-trivial class $l$ and therefore it cannot be trivial. So we find that $\CH^1(\Z;\GZ)\neq \{1\}$. In fact this shows that 
	\[T^1_\hbar(\mathbb{T}^2)\hookrightarrow \CH^1(\Z;\GZ).\] Note also that, since 
	$\CH^2(\Z;\mathcal{Z}^\times)=0$, we find that the map $\CH^1(\Z;\G)\rightarrow \CH^1(\Z;\GZ)$ is surjective and, since $\CH^2(\Z;\C\hhbar^\times)=0$, so is the map 
	$\CH^1(\Z;\AH^\times)\rightarrow \CH^1(\Z;\AH^\times/\C\hhbar^\times)$. Thus 
	we find that $\CH^1(\Z;\GZ)$ is essentially given by $T^1_\hbar(\mathbb{T}^2)$ and $\CH^1(\Z;\AH^\times)$ the last of which is given essentially by 
	$\CH^1(\Z;C^\infty(\mathbb{T}^2))$ and $\CH^1(\Z;C^\infty(\mathbb{T}^2)^\times)$.
\end{example}

Many more examples can of course be considered, for instance when $\Gamma$ is finite, but $\CH^1(M)\neq 0$. We will leave these examples to the reader.

\end{document}